\DeclareMathOperator{\degree}{\text{deg}}
\DeclareMathOperator{\flambda}{\mathcal{F}_\lambda}
\DeclareMathOperator{\A}{\mathfrak{A}}
\DeclareMathOperator{\gl}{\mathfrak{gl}}
\DeclareMathOperator {\C}{\mathbb{C}}
\DeclareMathOperator{\uu}{\mathcal{U}}
\DeclareMathOperator{\ff}{\mathcal{F}}
\newcommand{\bonebar}{\ensuremath{\overline{B}_1}}
\newcommand{\cAlg}{\cC\text{-Alg}}
\newcommand{\Vect}{\operatorname{Vect}}
\newcommand{\End}{\operatorname{End}}
\newcommand{\Hom}{\operatorname{Hom}}
\newcommand{\sgn}{\text{sgn}}
\newcommand{\cC}{\mathcal{C}}
\newcommand{\ot}{\otimes}
\newcommand{\bt}{\boxtimes}
\newcommand{\id}{\operatorname{id}}
\newcommand{\CC}{\mathbb{C}}
\newcommand{\NN}{\mathbb{N}}
\newcommand{\ZZ}{\mathbb{Z}}
\def\HH{\hbox{${\mathcal H}$\kern-5.2pt${\mathcal H}$}}
\newtheorem{theorem}{Theorem}[section]
\newtheorem{lemma}[theorem]{Lemma}
\newtheorem{conjecture}[theorem]{Conjecture}
\newtheorem{claim}[theorem]{Claim}
\newtheorem{proposition}[theorem]{Proposition}
\newtheorem{corollary}[theorem]{Corollary}
\theoremstyle{definition}
\newtheorem{definition}[theorem]{Definition}
\newtheorem{example}[theorem]{Example}
\newtheorem{remark}[theorem]{Remark}
\begin{document}

\author{Asilata Bapat}
\email{asilata@mit.edu}
\author{David Jordan}
\email{djordan@math.mit.edu}
\title{Lower central series of free algebras in symmetric tensor categories}


\begin{abstract} We continue the study of the lower central series of a free associative algebra, initiated by B. Feigin and B. Shoikhet \cite{FS}.  We generalize via Schur functors the constructions of the lower central series to any symmetric tensor category; specifically we compute the modified first quotient $\overline{B}_1,$ and second and third quotients $B_2,$ and $B_3$ of the series for a free algebra $T(V)$ in any symmetric tensor category, generalizing the main results of \cite{FS} and \cite{AJ}.  In the case $A_{m|n}:=T(\CC^{m|n})$, we use these results to compute the explicit Hilbert series.  Finally, we prove a result relating the lower central series to the corresponding filtration by two-sided associative ideals, confirming a conjecture from \cite{EKM}, and another one from \cite{AJ}, as corollaries.
\end{abstract}
\maketitle
\section{Introduction}
The \emph{lower central series} of an associative algebra $A$ is the descending filtration by Lie ideals $A=L_1(A)\supset L_2(A)\supset\cdots$ defined inductively by $L_{k+1} := [A,L_k]$ for $k\geq 2$. The corresponding associated graded Lie algebra is denoted by $B(A) = \bigoplus_{i=0}^\infty B_k$, where $B_k:= L_k/L_{k+1}$.  We let $M_k:=AL_k$ denote the two-sided associative ideal generated by $L_k$, and let $N_k:=M_k/M_{k+1}$.  We let $Z$ denote the image of $M_3$ in $B_1$, and $\overline{B}_1:=B_1/Z$.

The notions of associative algebras and Lie algebras make sense in any symmetric tensor category $\cC$, as does the lower central series filtration.  These constructions are given in Section 3; in particular, we consider the lower central series of the tensor algebra $T(V)$ of any object $V\in\cC$.  In this generality, the lower central series quotients $B_k$ are functorial in $V$, and may be expressed in the basis of Schur functors $\mathbb{S}_\lambda$, as is explained in Section 4.

It follows that there exists a universal decomposition of the functor $V\mapsto B_k(T(V))$ into a sum of Schur functors $\mathbb{S}_\lambda$, which holds in any symmetric tensor category.  Moreover, to compute this decomposition it suffices to consider only the cases $\cC=\Vect, V=\CC^n$, as $n\to\infty$. As an application, we have:

\begin{theorem} \label{B1B2B3}For any symmetric tensor category $\cC$, and any $V\in\cC$, the maps $\phi$ from Theorem \ref{thm:fsisomorphism}, and $f_3$ from Theorem \ref{thm:AJisomorphism}, induce natural isomorphisms:
\begin{align*}\overline{B}_1(T(V)) &\cong \Omega^{ev}(V)/\Omega^{ev}_{ex}(V),\\
B_2(T(V)) &\cong \Omega^{ev}_{ex}(V),\\
B_3(T(V)) &\cong S(V)\ot (\oplus_{k=0}^{\infty} \mathbb{S}_{(2,1^{2k+1})}(V)).
\end{align*}
\end{theorem}

In Section 5, we consider the free algebra $A=A_{m|n}=T(\CC^{m|n})$ on even generators $x_1,\ldots x_m$ and odd generators $y_1,\ldots y_n$; we study the lower central series with respect to the usual super commutator $[a,b]:=ab - (-1)^{|a||b|}ba$.  The algebra $A$ and thus each of the $B_k$ are graded by the degree in the generators.  Let us write $h_{B_k}(\mathbf{u},\mathbf{v})$ for the multi-graded Hilbert series of the $B_k$, where $\mathbf{u}=(u_1,\ldots,u_m), \mathbf{v}=(v_1,\ldots,v_n)$ record the degree in the even and odd variables, respectively.
Our first main result is a computation of the Hilbert series for $\overline{B}_1$, $B_2$ and $B_3$, based on the above decomposition:

\begin{theorem} \label{B1B2B3HS} We have the following Hilbert series:
\begin{align*}h_{\overline{B}_1}(\mathbf{u},\mathbf{v})&= \frac{1}{4}(\mathbf{P}_{m|n} + \sum_{i=1}^m\frac{u_i}{2(1-u_i)} + \sum_{j=1}^n\frac{v_j}{2(1+v_j)} + 3),
 \\
h_{B_2}(\mathbf{u},\mathbf{v})&= \frac{1}{4}(\mathbf{P}_{m|n} - \sum_{i=1}^m\frac{u_i}{2(1-u_i)} - \sum_{j=1}^n\frac{v_j}{2(1+v_j)} - 1),\\
h_{B_3}(\mathbf{u},\mathbf{v})&=\frac12((\sum_i u_i + \sum_j v_j)(\mathbf{P}_{m|n} +1) - (\mathbf{P}_{m|n} -1)),
\end{align*}
where $\mathbf{P}_{m|n}:=\prod_{i=1}^m\frac{(1+u_i)}{(1-u_i)}\cdot\prod_{j=1}^n\frac{(1+v_j)}{(1-v_j)}$,
\end{theorem}

Our second main result, presented in Section 6, relates the Lie ideals $L_k$ for any algebra $A$ to the two-sided associative ideals $M_k:=A L_k$.  More precisely, we prove the following theorem (for any algebra in any symmetric tensor category $\cC$):
\begin{theorem}\label{mainident} $[M_j,L_k]\subset L_{k+j}$, whenever $j$ is odd. \end{theorem}
In the case $A=A_n$, this may be seen as a strengthening of a special case of ``Key-Lemma" of \cite{FS}, which asserts that $[M_3,L_1]\subset L_3$.
While the proof of this theorem is largely by direct computation, there are many implications.  Firstly, we prove the ``if" direction of \cite{EKM}, Conjecture 3.6 as a corollary:
\begin{corollary}\label{mjmkcor} We have $M_jM_k\subset M_{j+k-1}$, whenever $j$ or $k$ is odd.\end{corollary}

As another application of Theorem \ref{mainident}, we have:
\begin{corollary}\label{Bmcor} $B_k= [A_{\leq 2},B_{k-1}],$ for all $k$. \end{corollary}
A central observation of \cite{FS} was that the lower central series quotients $B_k, k\geq 2$ each carry an action of the Lie algebra $W_n$ of vector fields on $\CC^n$ and that they are iterated extensions of so-called ``tensor field modules'' $\flambda$ associated to a Young diagram $\lambda$.   Theorem \ref{Bmcor} was conjectured in \cite{AJ}, Remark 1.4, where it was explained (see also the proof of Lemma 5.4, {\it loc. cit.}) how to derive the following:
\begin{corollary}
  \label{thm:strongerlambdabound}
  Let $k\geq 3$. For $\flambda$ in the Jordan-H\"older series of $B_k(A_n)$, we have:
    \[
    |\lambda|\leq 2k-3+2\left\lfloor \frac{n-2}{2}\right\rfloor.
    \]
\end{corollary}

Let $\overline{\lambda}$ denote the Young diagram obtained by deleting the first column of $\lambda$.

\begin{corollary}
  \label{cor:lambdabound}
Let $k\geq 3$. For $\flambda$ in the Jordan-H\"older series of $B_k(A_n)$, we have:
    \[
    |\overline{\lambda}|\leq 2k-5.
    \]
\end{corollary}

Let $\mathcal{G}_\lambda(V):=\mathbb{S}_\lambda(V)\ot \Omega^{ev}(V)$.   For example, $A/M_3\cong \mathcal{G}_{(0)}$, and $B_3\cong \mathcal{G}_{(2,1)}$, by Theorem \ref{B1B2B3}.  As another consequence of Theorem \ref{Bmcor}, we have:

\begin{proposition}\label{glambdabound} Let $\cC$ be a symmetric tensor category, let $V\in\cC$, and let $A=T(V)$.  There exists a finite collection $\Lambda_k$ of Young diagrams $\lambda$, and a surjection $\oplus_{\lambda\in\Lambda_k}\mathcal{G}_{\lambda} \to B_k$.
\end{proposition}

For a graded vector space $M$, let $M[d]$ denote the $d$th graded component.  We have:

\begin{corollary}\label{weakbound}Let $k\geq 3$, and let $A:=A_{m|n}$.  There exists $C_k(m,n)$ such that $\dim B_k(A_{m|n})[d]\leq C_k(m,n)d^{m+n-1}$.
\end{corollary}

In Section 8, we state some open questions and conjectures, and suggest possible approaches to their solution.

\subsection*{Acknowledgments} We are deeply grateful to P. Etingof for his guidance and advice.  The work of the first author was supported by MIT's SPUR and UROP programs for undergraduate research.  The work of the second author was supported by NSF grant DMS-0504847.  We are also grateful to the referee and to Alexei Krasilnikov, for catching an error in a previous proof of Lemma \ref{keylemma}.

\section{Preliminaries}
In this section, we recall definitions for the lower central series of an associative algebra $A$, its associated graded Lie algebra $B(A)$, the Lie algebras $W_n$, and the tensor field modules $\flambda$.
\begin{definition}
Let $W_n:=Der(\C[x_1,\ldots,x_n])$ denote the Lie algebra of polynomial vector fields on $\CC^n$. Then $W_n\cong\bigoplus_i\C[x_1,\ldots,x_n]\partial_i$,
with bracket 
\[
[p\partial_i,q\partial_j] = p\frac{\partial q}{\partial x_i}\partial_j - q\frac{\partial p}{\partial x_j}\partial_i.
\]
\end{definition}
Let $W_n^0$ denote the Lie subalgebra of vector fields vanishing at the origin, and let $W_n^{00}\subset W_n^0$ denote its Lie ideal of vector fields vanishing at the origin to at least  second order. Then $W_n^0/W_n^{00}$ is isomorphic to $\gl_n$, via the map $x_i\partial_j\mapsto E_{ij}$. Let $\lambda = (\lambda_1\geq\cdots\geq\lambda_n)$ be a Young diagram, and let $V_\lambda$ be the corresponding irreducible $\gl_n$-module. Then $V_\lambda$ is also a representation of $W_n^0$ via the projection $W_n^0\to W_n^0/W_n^{00}\cong \gl_n$. 

Let $\widetilde{\ff}_\lambda=  \Hom^{fin.}_{\uu(W_n^0)}(\uu(W_n),V_\lambda)$, denote the finite part of the coinduced module $\Hom_{\uu(W_n^0)}(\uu(W_n),V_\lambda)$, spanned by homogeneous vectors. As a vector space, $\widetilde{\ff}_\lambda$ is isomorphic to $\C[x_1,\ldots,x_n]\otimes V_\lambda$.
\begin{theorem}[\cite{ANR}]\label{flambdairrep}
If $\lambda_1\geq 2$ or if $\lambda = (1^n)$, then $\widetilde{\ff}_\lambda$ is an irreducible $W_n$-module. Otherwise, $\lambda = (1^k,0^{n-k})$, and $\widetilde{\ff}_\lambda$ is $\Omega^k(\C^n)$, the space of polynomial differential $k$-forms on $\C^n$. In this case $\widetilde{\ff}_\lambda$ contains a unique irreducible submodule consisting of the closed $k$-forms.
\end{theorem}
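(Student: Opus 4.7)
The plan is to use the standard ``degree-lowering and then spreading'' technique for coinduced $W_n$-modules. Equip $\widetilde{\ff}_\lambda \cong \CC[x_1,\ldots,x_n] \otimes V_\lambda$ with the $\NN$-grading by polynomial degree. The translations $\partial_i\in W_n$ strictly lower this grading, so any nonzero submodule $N$ contains, after repeated application of the $\partial_i$'s to any nonzero element, a nonzero vector in degree zero of the form $1\otimes v$. The degree-zero component $1\otimes V_\lambda$ is preserved by $W_n^0$, and the $W_n^0$-action on it factors through the irreducible $\gl_n$-module $V_\lambda$, so $1\otimes V_\lambda \subset N$.

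Irreducibility in the generic case $\lambda_1\geq 2$ or $\lambda=(1^n)$ then reduces to showing that the ``raising'' operators $x_ix_j\partial_k\in W_n^{00}$, when iterated on $1\otimes V_\lambda$, generate the whole module. I would prove this by analyzing the $\gl_n$-isotypic decomposition of $\CC[x_1,\ldots,x_n]\otimes V_\lambda$ via Pieri and Littlewood--Richardson, and exhibiting, for each summand $V_\mu$ appearing in polynomial degree $d$, an explicit word in the $x_ix_j\partial_k$ whose action on $1\otimes V_\lambda$ hits the $V_\mu$-isotypic component nontrivially. The hypothesis on $\lambda$ is exactly what makes this spreading surjective; otherwise, at least one isotypic summand is missed at every degree, reflecting a proper submodule.

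In the remaining case $\lambda=(1^k,0^{n-k})$ with $k<n$, identify $V_\lambda\cong\Lambda^k\CC^n$ and hence $\widetilde{\ff}_\lambda\cong\Omega^k(\CC^n)$. The de Rham differential $d\colon\Omega^k\to\Omega^{k+1}$ is $W_n$-equivariant, since Cartan's formula $L_X=di_X+i_Xd$ forces $[L_X,d]=0$; hence the closed forms $\Omega^k_{\mathrm{cl}}$ form a $W_n$-submodule. Any nonzero submodule $N$ contains $1\otimes\Lambda^k\CC^n$ by the first paragraph, and these constant forms are closed, so $N\supset\Omega^k_{\mathrm{cl}}$; thus $\Omega^k_{\mathrm{cl}}$ is the socle. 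Its own irreducibility I would prove by induction on $k$ via the polynomial Poincar\'e lemma, which identifies $\Omega^k_{\mathrm{cl}}$ with the quotient $\Omega^{k-1}/\Omega^{k-1}_{\mathrm{cl}}$ for $k\geq 1$, reducing to the previous step of the induction together with another application of the degree-lowering argument inside $\Omega^k_{\mathrm{cl}}$.

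The main obstacle I anticipate is the Pieri-type spreading argument in the second paragraph: pinning down exactly which $\gl_n$-summands of $\CC[x_1,\ldots,x_n]\otimes V_\lambda$ are reached by applying products of the $x_ix_j\partial_k$'s to $1\otimes V_\lambda$, and showing that the list of ``missed'' summands is empty precisely when $\lambda_1\geq 2$ or $\lambda=(1^n)$. This is the combinatorial heart of the theorem, and is where the case distinction in the statement gets forced.
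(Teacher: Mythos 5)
This statement is not proved in the paper at all: it is quoted verbatim from Rudakov \cite{ANR}, so there is no internal proof to compare against. Judged on its own terms, your outline follows the standard (indeed, essentially Rudakov's) strategy --- use the translations $\partial_i$ to push any nonzero vector of a submodule down to the degree-zero layer $1\otimes V_\lambda$, use $\gl_n$-irreducibility of that layer, and then analyze what the degree-raising vector fields generate from it. That skeleton is sound. One small simplification you are missing: once you know every nonzero submodule contains $1\otimes V_\lambda$, the submodule $N_0$ generated by $1\otimes V_\lambda$ is automatically the unique minimal, hence unique irreducible, submodule; so the separate induction on $k$ via the Poincar\'e lemma to prove irreducibility of $\Omega^k_{\mathrm{cl}}$ is redundant. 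What actually needs proof in both cases is the single statement ``identify $N_0$'': in the generic case $N_0=\widetilde{\ff}_\lambda$, and in the degenerate case $N_0=\Omega^k_{\mathrm{cl}}$. (Also, a small inaccuracy: $1\otimes V_\lambda$ is not preserved by $W_n^0$ --- the elements of $W_n^{00}$ raise degree --- only by the linear vector fields; but the conclusion you draw needs only the $\gl_n$-action on the degree-zero layer, so this is cosmetic.)

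The genuine gap is that the decisive step is left as a plan. Your second paragraph defers the entire content of the theorem --- that iterating the operators $x_ix_j\partial_k$ on $1\otimes V_\lambda$ reaches every $\gl_n$-isotypic component exactly when $\lambda_1\geq 2$ or $\lambda=(1^n)$, and misses precisely the non-closed forms when $\lambda=(1^k)$, $k<n$ --- to an unspecified Pieri/Littlewood--Richardson computation, and you say so yourself. Likewise, in the degenerate case the inference ``$N\supset 1\otimes\Lambda^k\CC^n$, the constant forms are closed, so $N\supset\Omega^k_{\mathrm{cl}}$'' silently assumes that the closed $k$-forms are generated as a $W_n$-module by the constant forms, which is again an instance of the same spreading problem. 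As it stands the proposal is a correct reduction of the theorem to its combinatorial core, not a proof of it; to complete it you would need to carry out the isotypic analysis (or cite Rudakov for it, as the paper does).
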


Let $\flambda$ denote the unique irreducible submodule of $\widetilde{\ff}_\lambda$, which is equal to $\widetilde{\ff}_\lambda$ if $\lambda_1\geq 2$ or $\lambda = (1^n)$.

\begin{theorem}[\cite{ANR}]
Any $W_n$-module on which the operators $x_i\partial_i$, for $i = 1,\ldots , n$, act semisimply with nonnegative integer eigenvalues and finite dimensional common eigenspaces has a Jordan-H\"older series whose composition factors are $\flambda$, each occurring with finite multiplicity.
\label{thm:rudakovwn}
\end{theorem}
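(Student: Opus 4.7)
The plan is to use the eigendecomposition of the Euler operator $E:=\sum_i x_i\partial_i$ to reduce the infinite-dimensional classification to a sequence of finite-dimensional $\gl_n$-theoretic extractions, producing at each stage an irreducible submodule of the form $\flambda$ and iterating. Since each $x_i\partial_i$ acts semisimply with nonnegative integer eigenvalues and commuting spectrum, so does $E$, giving $M=\bigoplus_{d\geq 0}M_d$ with $M_d$ a finite direct sum of the (finite-dimensional) joint eigenspaces $M_{\mathbf{a}}$ having $\sum_i a_i=d$; hence $\dim M_d<\infty$. Recall that $W_n$ is $\ZZ_{\geq -1}$-graded with $(W_n)_{-1}=\Span(\partial_i)$, $(W_n)_0=\gl_n$, and $(W_n)_{\geq 1}=W_n^{00}$. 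The $W_n$-action on $M$ shifts this grading accordingly; in particular the $\partial_i$ annihilate $M_{d_0}$ for $d_0:=\min\{d:M_d\neq 0\}$.

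Next I would extract a submodule of $M$ isomorphic to some $\flambda$. The space $M_{d_0}$ carries a $\gl_n$-action from $(W_n)_0$; pick an irreducible summand $V_\lambda\subseteq M_{d_0}$. Viewing $V_\lambda$ as a module for the Lie subalgebra $W_n^{\leq 0}:=\Span(\partial_i)\oplus\gl_n$ on which $\Span(\partial_i)$ acts trivially, Frobenius reciprocity yields a canonical $W_n$-equivariant homomorphism $\operatorname{Ind}_{W_n^{\leq 0}}^{W_n}(V_\lambda)\to M$ extending the inclusion. I would then argue that this map factors through the natural quotient $\widetilde{\ff}_\lambda=\C[x_1,\ldots,x_n]\otimes V_\lambda$ of the induced module, using the finite-dimensionality of each $M_d$ to force the requisite relations. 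By Theorem~\ref{flambdairrep}, $\widetilde{\ff}_\lambda$ contains a unique irreducible submodule $\flambda$, whose image in $M$ is nonzero (it contains the original $V_\lambda$ in degree $d_0$) and hence injective since $\flambda$ is irreducible.

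Having produced $\flambda\hookrightarrow M$, I would pass to the quotient $M/\flambda$, which still satisfies the hypotheses of the theorem, and iterate. This assembles an exhaustive ascending filtration of $M$ whose successive quotients are of the form $\flambda$. Finite multiplicity of each fixed $\flambda$ follows because any copy of $\flambda$ contributes a dedicated copy of $V_\lambda$ at a specific minimal degree, and the dimension of the corresponding $M_d$ bounds the total multiplicity. The crux, and where I expect the real work to lie, is showing that the induced map $\operatorname{Ind}_{W_n^{\leq 0}}^{W_n}(V_\lambda)\to M$ factors through $\widetilde{\ff}_\lambda$: this amounts to verifying that the kernel of the natural surjection $\uu(W_n^{00})\otimes V_\lambda\twoheadrightarrow\C[x_1,\ldots,x_n]\otimes V_\lambda$ is contained in the kernel of the map into $M$, a consequence of the bounded-growth condition on the joint eigenspaces.
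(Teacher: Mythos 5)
You should first be aware that the paper does not prove this statement: it is quoted verbatim from Rudakov's paper \cite{ANR}, so you are in effect reproving a substantial classification theorem rather than matching an internal argument. Your skeleton --- decompose $M$ by the joint spectrum of the $x_i\partial_i$, locate a $\gl_n$-irreducible $V_\lambda$ of singular vectors in the lowest Euler degree, generate a submodule, pass to the quotient and iterate, and bound multiplicities by $\dim M_{|\lambda|}$ --- is indeed the standard outline, and the finite-multiplicity argument at the end is fine. The gap is exactly where you say you expect it, and the justification you offer there cannot work. The submodule of $M$ generated by $V_\lambda$ is a quotient of $\operatorname{Ind}_{W_n^{\leq 0}}^{W_n}(V_\lambda)\cong\uu((W_n)_{\geq 1})\otimes V_\lambda$, and you must control which quotient. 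Finite-dimensionality of the joint eigenspaces forces nothing here, because the induced module itself already has finite-dimensional joint eigenspaces (each graded piece of $\uu((W_n)_{\geq 1})$ is finite-dimensional). The hypothesis that actually does the work --- and which your argument never uses --- is the \emph{nonnegativity} of the eigenvalues of each individual $x_i\partial_i$: the induced module is supported on weights such as $\mathrm{wt}(v)+(2,-1,0,\dots,0)$ with negative entries, and one must show that the quotient by the submodule generated by all such vectors has Jordan--H\"older factors among the $\ff_\mu$. That is precisely Rudakov's hard technical input: a classification of the singular vectors of the (co)induced modules, showing that the only degenerations occur for $\lambda=(1^k)$ and are given by the de Rham differentials $d\colon\Omega^k\to\Omega^{k+1}$. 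Without this, the claim that the map ``factors through'' a tensor field module is an assertion, not a proof.

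There is also a secondary error: $\widetilde{\ff}_\lambda$ is the \emph{coinduced} module $\Hom_{\uu(W_n^0)}(\uu(W_n),V_\lambda)$, not a natural quotient of the induced one. There is a canonical map $\operatorname{Ind}_{W_n^{\leq 0}}^{W_n}(V_\lambda)\to\widetilde{\ff}_\lambda$, but its image is only the submodule generated by $1\otimes V_\lambda$; for $\lambda=(1^k)$ with $k<n$ this is the closed forms $\flambda=\Omega^k_{cl}\subsetneq\Omega^k$, and for $\lambda=0$ it is just the constants in $\C[x_1,\ldots,x_n]$, so $\widetilde{\ff}_\lambda$ is not generated by its singular vectors in the degenerate cases. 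Consequently ``factors through $\widetilde{\ff}_\lambda$'' should at best be ``factors through $\flambda$,'' and even granting that one must still rule out (or, for the Jordan--H\"older statement, analyze) the possibility that the submodule generated by $V_\lambda$ is a strictly larger quotient of the induced module; again this requires the singular-vector analysis. As written, the proposal correctly identifies the architecture of the proof but leaves its essential content unestablished.
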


In \cite{FS}, it was realized that for $A=A_n$, each $B_k$, $k\geq 2$ carries a $W_n$-action, arising from an identification of $A/M_3$ with the algebra $\Omega^{ev}$ of even degree differential forms on $\CC^n$.  Let $\Omega^{ev}_{ex}$ denote the subspace of even degree exact forms. Let us denote by ``$\wedge$'' the wedge product of differential forms.  We let ``$\ast$'' denote the \emph{Fedosov product},
$$a\ast b := a\wedge b + (-1)^{\degree a}da\wedge db,$$
which defines an associative multiplication on $\Omega,$ preserving $\Omega^{ev}$.  The space $\Omega$ (resp. $\Omega^{ev}$) equipped with the Fedosov product is denoted by $\Omega_\ast$ (resp. $\Omega^{ev}_\ast$). We have:

\begin{theorem}[\cite{FS}]
The map $\phi: x_i\mapsto x_i$ extends to an isomorphism of algebras $\Omega^{ev}_\ast\cong A/M_3(A)$, restricts to an isomorphism $\Omega^{ev}_{ex}\cong B_2(A)$, and descends to an isomorphism of  $\Omega^{ev}/\Omega^{ev}_{ex}\cong \bonebar(A)$.\label{thm:fsisomorphism}
\end{theorem}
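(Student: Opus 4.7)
The plan is to construct the inverse map directly and then verify the restrictions on $B_2$ and $\bonebar$. Since $A = A_n$ is the free associative algebra on $x_1,\ldots,x_n$, there is a unique algebra homomorphism $\psi: A \to \Omega^{ev}_\ast$ determined by $\psi(x_i) = x_i \in \Omega^0$. Expanding via the Fedosov product, $\psi(x_ix_j) = x_i\ast x_j = x_ix_j + dx_i\wedge dx_j$, so $\psi$ converts noncommutative words into polynomials corrected by higher even forms. The basic identities I would first establish by direct calculation are: $[f,g]_\ast := f\ast g - g\ast f = 2\, df\wedge dg$ for any $0$-forms $f,g$, and $(df\wedge dg)\ast h = h(df\wedge dg) = h\ast(df\wedge dg)$ for any polynomial $h$. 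Both follow immediately from $d^2=0$ and the explicit formula for $\ast$.

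The key step is to show $\psi$ factors through $A/M_3$, which reduces to $\psi(L_3)=0$ since $\psi$ is an algebra homomorphism and $M_3 = AL_3$. Expanding $\psi([a,b]) = [\psi(a),\psi(b)]_\ast$ and combining the two identities above shows that for generators $[[x_i,x_j],x_k]$ maps to zero; to extend this to all of $L_3$ one proceeds by induction on bracket length, stratifying the image of a word by its maximal form-degree and arguing component by component, using that Fedosov commutators of 0-forms produce exact 2-forms of the form $df\wedge dg$ which are then central.

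Next, $\bar\psi: A/M_3 \to \Omega^{ev}_\ast$ must be shown to be a vector-space isomorphism. Surjectivity is clear since the $dx_i\wedge dx_j$ already lie in the image (as $\tfrac12\psi([x_i,x_j])$), and iterating generates all even forms. For injectivity I would match Hilbert series: both sides are isomorphic as graded vector spaces to $S(V)\otimes\bigwedge^{ev}(V)$, where $V = \Span\{x_1,\ldots,x_n\}$. On the $A/M_3$ side this amounts to a PBW-type normal form: since $[[A,A],A]\subset L_3\subset M_3$, the commutators $[x_i,x_j]$ are central in $A/M_3$, so any element can be written as a sum of products of ordered monomials $x_{i_1}^{a_1}\cdots x_{i_n}^{a_n}$ times products of commutators $[x_{j_1},x_{k_1}]\cdots[x_{j_m},x_{k_m}]$. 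Linear independence of this spanning set must be checked, for instance by exhibiting a compatible grading and comparing dimensions in each multi-degree.

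Finally, the restrictions follow by tracking the image of $L_2$. Under $\bar\psi$, the commutator $[x_i,x_j]$ maps to the exact form $2\,dx_i\wedge dx_j$, so the image of $L_2$ lies in $\Omega^{ev}_{ex}$; conversely every exact even form is hit, by writing it in terms of the generators $dx_i\wedge dx_j$ and pulling back. The main obstacle here is identifying the kernel of the composite $L_2\hookrightarrow A\twoheadrightarrow A/M_3$ as exactly $L_3$, i.e.\ the identity $L_2\cap M_3 = L_3$, which is the substance of the ``Key-Lemma'' of \cite{FS} and likely the hardest step of the argument, requiring a careful filtration-based argument. Granted this, $B_2 = L_2/L_3 \cong \Omega^{ev}_{ex}$, and taking cokernels of $L_2 \to A/M_3$ gives $\bonebar = A/(L_2+M_3) \cong \Omega^{ev}/\Omega^{ev}_{ex}$.
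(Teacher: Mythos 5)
First, note that the paper does not prove this statement at all: it is quoted from \cite{FS} as background, so there is no internal proof to compare against. Judged on its own, your outline gets the soft parts right (the Fedosov identities, $\psi(L_3)=0$, surjectivity, and the formal derivation of the $B_2$ and $\bonebar$ statements from the main isomorphism), but it defers both of the genuinely hard steps, and in one place the proposed substitute argument is circular.

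The first gap is injectivity of $\bar\psi:A/M_3\to\Omega^{ev}_\ast$. Your spanning set for $A/M_3$ --- ordered monomials times arbitrary products of commutators $[x_{j_1},x_{k_1}]\cdots[x_{j_m},x_{k_m}]$ --- maps to a linearly \emph{dependent} set: for instance $[x_1,x_2]^2\mapsto 4\,dx_1\wedge dx_2\wedge dx_1\wedge dx_2=0$. So to run the ``spanning set maps to a basis'' argument you must first cut the spanning set down, which requires proving relations in $A/M_3$ beyond centrality of commutators; concretely, the graded antisymmetry $[a,b][c,d]+[a,d][c,b]\in M_3$ (equivalently, that products of commutators depend on their entries only through a wedge). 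This is one of the two substantive computations in \cite{FS}, and ``exhibiting a compatible grading and comparing dimensions in each multi-degree'' cannot replace it: the dimension of $(A/M_3)[d]$ is exactly what is unknown, so a dimension comparison presupposes the upper bound you are trying to establish. The second gap you name yourself: the identification $B_2\cong\Omega^{ev}_{ex}$ needs $L_2\cap M_3\subseteq L_3$, which you correctly attribute to the ``Key Lemma'' of \cite{FS} but do not prove. Since both the injectivity relation and the Key Lemma are the actual content of the theorem, the proposal as written is a correct reduction, not a proof. (A minor simplification elsewhere: the identity $[\alpha,\beta]_\ast=2\,d\alpha\wedge d\beta$ holds for arbitrary even forms, so $\psi(L_3)=0$ follows in one line from $d^2=0$; no induction on bracket length or stratification by form-degree is needed.)
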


\begin{theorem}[\cite{FS}]
The action of $W_n$ on $\bonebar\cong\Omega^{ev}/\Omega^{ev}_{ex}$ by Lie derivatives extends uniquely to an action of $W_n$ on $B_k$, for $k\geq 2$. 
\end{theorem}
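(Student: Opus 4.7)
The plan is to construct the action on $B_k$ by lifting vector fields to derivations of $A$. For $\xi = \sum_j p_j(x)\partial_j \in W_n$, choose noncommutative polynomial lifts $\tilde{p}_j \in A$ whose images in the abelianization $A/L_2 \cong S(V) = \CC[x_1,\ldots,x_n]$ recover $p_j$, and let $\tilde\xi$ be the unique derivation of $A$ with $\tilde\xi(x_j) = \tilde{p}_j$. Since $L_k = [A,L_{k-1}]$ and $\tilde\xi$ is a derivation, a straightforward induction gives $\tilde\xi(L_k) \subset L_k$ for all $k$, so $\tilde\xi$ descends to a linear endomorphism of $B_k = L_k/L_{k+1}$.

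The real content is showing that this descended endomorphism is independent of the choice of lift, and that $\xi \mapsto [\tilde\xi]$ is a Lie algebra map. For independence, two lifts of the same $\xi$ (both inducing the prescribed Lie derivative action on $\bar{B}_1 = A/(M_3+L_2)$) differ by a derivation $\delta$ of $A$ with $\delta(x_j) \in M_3 + L_2$, and I need $\delta(L_k) \subset L_{k+1}$ for $k \geq 2$. Decompose $\delta = \delta_1 + \delta_2$ with $\delta_1(x_j) \in M_3$ and $\delta_2(x_j) \in L_2$. The $\delta_1$-piece is handled cleanly by Theorem \ref{mainident}: since $\delta_1(A) \subset M_3$ by Leibniz, the identity $[M_3, L_m] \subset L_{m+3}$ together with induction on $k$ immediately yields $\delta_1(L_k) \subset L_{k+2} \subset L_{k+1}$.

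The Lie-bracket compatibility then follows formally: $[\tilde\xi_1, \tilde\xi_2]$ and $\widetilde{[\xi_1,\xi_2]}$ are both derivations of $A$ lifting the same vector field $[\xi_1,\xi_2]$, the latter being well-defined on $A/M_3$ because Lie derivatives commute with the exterior derivative $d$ and hence act as algebra derivations of the Fedosov product on $\Omega^{ev}_* \cong A/M_3$ (Theorem \ref{thm:fsisomorphism}). Their difference is a $\delta$ of the form analyzed above, and so they induce the same endomorphism of $B_k$. Uniqueness of the extension reduces to showing that any $W_n$-equivariant endomorphism of $B_k$ extending the zero map on $\bar{B}_1$ vanishes; one argues this using the Jordan--H\"older filtration by tensor field modules $\flambda$ (Theorem \ref{thm:rudakovwn}) together with Schur's lemma applied to each $W_n$-isotypic component.

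The main obstacle is the $\delta_2$-piece in the well-definedness step. The analogue $[M_2, L_m] \subset L_{m+2}$ fails (the main identity theorem crucially requires $j$ odd), so one cannot bound $\delta_2(L_k)$ directly by Theorem \ref{mainident}. Instead, one must use that $\delta_2(x_j) \in L_2$ itself (not merely in $M_2$) and combine this with repeated applications of the Jacobi identity to rewrite $\delta_2$ applied to an iterated commutator $[a_1, [a_2, \ldots [a_{k-1}, a_k]\ldots]]$ as a sum of terms each of which is manifestly in $L_{k+1}$. This combinatorial step is the technical heart of the construction and essentially reproduces the argument of \cite{FS}.
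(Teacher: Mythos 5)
First, note that the paper does not prove this statement at all: it is quoted verbatim from \cite{FS}, so the only comparison available is with the construction in that reference, which your outline is clearly modelled on.

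There is a genuine gap, and it is exactly where you locate it -- but it is not a technicality that ``repeated applications of the Jacobi identity'' can repair; the claim you need is false as stated. You normalize lifts by requiring them to induce the Lie derivative only on $\bonebar=A/(M_3+L_2)$, which forces the difference $\delta$ of two lifts merely to satisfy $\delta(x_j)\in M_3+L_2$, and you then try to show that \emph{every} derivation with $\delta(x_j)\in M_3+L_2$ kills each $B_k$. The $\delta_2$ piece defeats this: take $n\geq 4$ and $\delta_2(x_1)=[x_1,x_2]$, $\delta_2(x_j)=0$ otherwise. Then $[x_1x_3,x_4]\in L_2$, while $\delta_2([x_1x_3,x_4])=[[x_1,x_2]x_3,x_4]$ maps under $\phi$ of Theorem \ref{thm:fsisomorphism} to $4\,dx_1\wedge dx_2\wedge dx_3\wedge dx_4\neq 0$ in $B_2\cong\Omega^{ev}_{ex}$; hence $\delta_2(L_2)\not\subset L_3$. (This $\delta_2$ is not the difference of two admissible lifts -- it already acts nontrivially on $\bonebar$ -- which is precisely why decomposing $\delta=\delta_1+\delta_2$ and treating the summands separately loses essential information.)

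The repair is to use the stronger normalization of \cite{FS}: require the lift $\tilde\xi$ to induce the Lie derivative $\mathcal{L}_\xi$ on \emph{all} of $A/M_3\cong\Omega^{ev}_\ast$, not just on the quotient $\bonebar$. Such lifts exist because $A$ is free and $\mathcal{L}_\xi$ is a derivation of the Fedosov product (it commutes with $d$ and is a derivation of $\wedge$), and two of them differ by a derivation $\delta$ with $\delta(x_j)\in M_3$, hence $\delta(A)\subset M_3$ by Leibniz. Then your $\delta_1$ argument is the whole argument: $[M_3,L_m]\subset L_{m+2}$ (the Key Lemma of \cite{FS}, or Theorem \ref{mainident} with $j=3$) together with the Jacobi identity gives $\delta(L_k)\subset L_{k+1}$, and no $\delta_2$ piece ever arises. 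Your uniqueness sketch should likewise be reformulated: the extension is pinned down by compatibility with the (surjective, equivariant) bracket maps out of $\bonebar\otimes B_{k-1}$, not by a Schur's-lemma argument on an endomorphism ``extending the zero map on $\bonebar$,'' since $B_k$ is neither a sub nor a quotient of a module containing $\bonebar$.
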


This action of $W_n$ clearly satisfies the conditions of Theorem \ref{thm:rudakovwn}.  It follows that $\overline{B}_1$ and each $B_k, k\geq 2$ have a Jordan-H\"older series with respect to this action, such that each composition factor is isomorphic to $\flambda$ for some Young diagram $\lambda$. By \cite{DE}, Corollary 3, the Jordan-H\"older series is of finite length.
In \cite{AJ}, similar techniques were used to give a description of $B_3(A_n)$.  We have:

\begin{theorem}\label{thm:AJisomorphism}
The map $f_3:\overline{B}_1\otimes B_2\to B_3, a\ot b\mapsto [a,b]$ restricts to a surjection $(\Omega^0/\CC)\otimes B_2\to B_3$, and induces an isomorphism
$$B_3\cong \oplus_{k=0}^{\infty} \mathcal{F}_{(2,1^{2k+1})}.$$
\end{theorem}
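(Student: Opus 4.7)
The plan is to prove Theorem \ref{thm:AJisomorphism} in three stages: first establish surjectivity of the commutator map $\bar{B}_1 \otimes B_2 \twoheadrightarrow B_3$; then reduce the first tensor factor to $\Omega^0/\CC$; and finally identify the image as $\oplus_{k \geq 0} \mathcal{F}_{(2,1^{2k+1})}$.

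For surjectivity, $B_3 = L_3/L_4 = [A,L_2]/L_4$ is spanned by classes $[a,b]$ with $a \in A$ and $b \in L_2$, and these depend on $b$ only modulo $L_3$, so $b$ effectively lies in $B_2$. The ``Key Lemma'' of \cite{FS} gives $[M_3, L_1] \subset L_3$, and a short Jacobi argument upgrades this to $[M_3, L_2] \subset L_4$; thus $a$ enters only through its image in $A/M_3 \cong \Omega^{ev}_\ast$. The filtration bound $[L_2, L_2] \subset L_4$ further kills the image of $L_2$ in the first factor, yielding a well-defined surjection from $\bar{B}_1 \otimes B_2$.

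The second stage is the main obstacle. Since $\Omega^{\geq 2, ev}$ contains genuinely non-exact forms, $\bar{B}_1 \cong \Omega^{ev}/\Omega^{ev}_{ex}$ is strictly larger than $\Omega^0/\CC$, so one must show that the contribution of higher-degree forms to $B_3$ is already captured by the degree-$0$ piece. The approach is to write any $\omega \in \Omega^{\geq 2, ev}$ as a Fedosov product $f \ast \alpha$ with $f \in \Omega^0$ and $\alpha$ exact, apply the associative Leibniz rule to lift back to $A$, and absorb the contribution from the exact-form factor into $[L_2, L_2] \subset L_4$. Tracking these cancellations modulo $L_4$ (rather than merely modulo $M_3$) is delicate and requires careful bookkeeping of how $M_4$-terms interact with $L_4$-terms inside $L_3$. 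Centrality of $\CC \subset \Omega^0$ then gives the further descent to $(\Omega^0/\CC) \otimes B_2 \twoheadrightarrow B_3$.

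For the final identification, $B_3$ carries a $W_n$-module structure inherited from $\bar{B}_1$ and $B_2$ satisfying the hypotheses of Theorem \ref{thm:rudakovwn}, so its Jordan-H\"older factors are all of the form $\mathcal{F}_\lambda$. Using $B_2 \cong \oplus_{k \geq 1} d(\Omega^{2k-1})$ with $d(\Omega^{2k-1}) \cong \mathcal{F}_{(1^{2k})}$ for $2k \leq n$, tensoring with $\Omega^0/\CC$ and invoking Pieri's rule, one identifies $\mathcal{F}_{(2,1^{2k+1})}$ as a composition factor for each $k \geq 0$. A Hilbert series comparison on both sides then confirms that the surjection is an isomorphism, with no other factors appearing.
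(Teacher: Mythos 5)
This theorem is quoted from \cite{AJ}; the paper itself gives no proof of it, so your outline can only be measured against the argument in that reference. Your first stage is correct and matches it: $[M_3,L_2]\subset L_4$ does follow from the Key Lemma $[M_3,L_1]\subset L_3$ by one application of the Jacobi identity, and together with $[L_2,L_2]\subset L_4$ this makes $f_3$ well defined on $\bar{B}_1\otimes B_2$ and surjective onto $B_3$.

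The remaining two stages have genuine gaps. The reduction from $\bar{B}_1$ to $\Omega^0/\CC$ is, as you say, the main obstacle, but you only gesture at it: the content of that step is a concrete identity in the free Lie ring, namely $[x[y,z],[w,v]]-[x,[w[y,z],v]]+[y,[w[x,z],v]]-[z,[w[x,y],v]]\in L_4$ (\cite{AJ}, Lemma 5.2, quoted in the proof of Theorem \ref{Bmcor} above), and your Fedosov factorization $\omega=f\ast\alpha$ does not by itself produce the required cancellation; until that identity (or a substitute) is proved, the surjection from $(\Omega^0/\CC)\otimes B_2$ is not established. More seriously, your final step is circular: a ``Hilbert series comparison'' presupposes an independent computation of $h_{B_3}$, which does not exist at this point --- the present paper computes $h_{B_3}$ \emph{from} this theorem, not conversely. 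What the identification actually requires is (a) the list of candidate composition factors of $(\Omega^0/\CC)\otimes B_2$ via Theorem \ref{thm:rudakovwn}; (b) explicit nonzero singular vectors of weight $(2,1^{2k+1})$ in $B_3$ showing each such factor survives, with multiplicity one; (c) a direct verification that every other candidate factor maps to zero in $B_3$; and (d) an argument that the resulting filtration splits as a direct sum. A Pieri-rule count supplies only (a).
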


\section{Lower series filtrations in symmetric tensor categories}
Let $(\cC,\sigma)$ be a symmetric tensor category.  For the sake of clarity, we suppress explicit mention of associators in all formulas and commutative diagrams in this section.
\begin{definition} A (unital) associative algebra $(A,m,\eta)$ in $\cC$ is an object $A\in\cC$, together with morphisms $\eta:\mathbf{1}\to A$ and $m:A\ot A\to A$ such that $m\circ(\eta\ot\id)=m\circ(\id\ot\eta)=\id$ and $m\circ(m\ot\id)=m\circ(\id\ot m)$.  Algebra morphisms are defined in the usual way, yielding the category $\cAlg$.  We will say that $A\in\cAlg$ is commutative if $m\circ(\id_{A\ot A}-\sigma_{A,A})=0$.

\end{definition}

\begin{definition}A Lie algebra $(\A,\mu)$ in $\cC$ is an object $\A$ and a morphism $\mu:\A\ot\A\to\A$ satisfying:
\begin{enumerate}
\item Skew symmetry:  $\mu\circ(\id_{\A\ot\A} + \sigma_{\A,\A})=0$.
\item Jacobi identity:  $\mu\circ(\id_{\A}\ot\mu)\circ(\id + (123) + (132))=0$.
\end{enumerate}
Lie algebra morphisms are defined in the usual way, yielding the category $\cC$-Lie.
\end{definition}
As with the example $\cC=\Vect$, we have the functor,
 $$\operatorname{Lie}:\cAlg\to\cC\text{-Lie}$$
$$(A,m,\eta)\mapsto (A,m\circ(\id - \sigma_{A,A})).$$ 

\begin{definition} The \emph{commutator morphisms} $l_k:\A^{\ot k}\to \A$ are defined inductively as follows: $l_1:=\id_{\A}, l_2:=\mu, l_m:=l_{k-1}\circ (\id^{k-2}\ot l_2)$.  The \emph{lower central series filtration} is the collection of $L_k(\A):=\operatorname{Im}(l_k)\subset \A$.  For each $k\in \NN$, we have injections $i_k: L_{k+1}\to L_k$ induced by the natural injections $i_1:L_2\to \A$.  We let $B_k:=\operatorname{coker} i_k$ denote the associated graded components.
\end{definition}

The following is a straightforward generalization of the corresponding statement for $\cC=\Vect$, which is proved by repeated use of Jacobi identity:
\begin{proposition}
Let $f:\A^{\ot k}\to \A$ be any morphism obtained by iterating $\mu$.  Then $\operatorname{Im}(f)\subset L_k$.\end{proposition}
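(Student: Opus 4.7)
The plan is to induct on the number $k$ of leaves of the iterated bracket. The cases $k=1,2$ are immediate, since the only possibilities are $f=l_1=\id_\A$ and $f=l_2=\mu$, with images $L_1=\A$ and $L_2$ respectively. For $k\geq 3$, any iterated bracket morphism $f:\A^{\ot k}\to\A$ is specified by a planar binary tree with $k$ leaves, and hence factors as $f=\mu\circ(g\ot h)$ for iterated brackets $g:\A^{\ot p}\to\A$ and $h:\A^{\ot q}\to\A$ with $p+q=k$. I would run a nested induction: outer on $k$, inner on the number $p$ of leaves on the left branch of the outermost $\mu$.

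For the inner base case $p=1$, one has $f=\mu\circ(\id_\A\ot h)$. The outer induction gives $\operatorname{Im}(h)\subset L_q$, so $h$ factors as $\iota_q\circ\widetilde h$ through the canonical inclusion $\iota_q$ of $L_q$ into $\A$. The defining recursion reads $l_k=\mu\circ(\id_\A\ot l_{k-1})$; factoring $l_{k-1}=\iota_{k-1}\circ\widetilde{l_{k-1}}$ and using that $\widetilde{l_{k-1}}$ is epi and $\ot$ is exact, one sees that $\mu\circ(\id_\A\ot\iota_{k-1})$ has image exactly $L_k$. Since $q=k-1$, it follows that $\operatorname{Im}(f)\subset\operatorname{Im}(\mu\circ(\id_\A\ot\iota_q))=L_k$.

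For the inner step $p\geq 2$, decompose $g=\mu\circ(g_1\ot g_2)$ with $g_i:\A^{\ot p_i}\to\A$ and $p_1+p_2=p$. The Jacobi identity rearranges to
\[
\mu\circ(\mu\ot\id_\A)=\mu\circ(\id_\A\ot\mu)-\mu\circ(\id_\A\ot\mu)\circ(\sigma_{\A,\A}\ot\id_\A),
\]
so composing with $g_1\ot g_2\ot h$ expresses $f$ as a difference of two morphisms of the form $\mu\circ((-)\ot(-))$ whose left branches have widths $p_1$ and $p_2$ respectively (the second obtained after sliding a braiding $\sigma_{\A^{\ot p_1},\A^{\ot p_2}}$ past $g_1\ot g_2$, which does not affect the image). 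Since $p_1,p_2<p$, the inner induction hypothesis places both summands in $L_k$, and therefore $\operatorname{Im}(f)\subset L_k$.

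The main technical nuisance I anticipate is the bookkeeping in the Jacobi step: tracking how the braidings produced there interact with the chosen factorization $g=\mu\circ(g_1\ot g_2)$, and confirming that the two resulting summands are genuine iterated brackets to which the inner induction applies. The argument tacitly uses that $\cC$ admits image factorizations and that $\ot$ is exact, both standard in an abelian symmetric tensor category and consistent with the paper's setup.
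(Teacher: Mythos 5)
Your argument is correct and is precisely the route the paper indicates (it gives no written proof, only the remark that the statement follows ``by repeated use of the Jacobi identity''): the nested induction with the rearranged Jacobi identity $\mu\circ(\mu\ot\id)=\mu\circ(\id\ot\mu)-\mu\circ(\id\ot\mu)\circ(\sigma_{\A,\A}\ot\id)$ is the standard reduction to right-normed brackets, and your use of naturality of $\sigma$, exactness of $\ot$, and image factorizations supplies exactly the categorical bookkeeping the paper leaves implicit. No gaps.
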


\begin{example}
We will consider the following examples of algebras and their associated Lie algebras.  Let $V\in\cC$.
\begin{enumerate}
\item The tensor algebra $T(V):=\oplus_i V^{\ot i}$ with the usual product.
\item The symmetric algebra $S(V):=T(V)/\langle\operatorname{Im}(\sigma_{V,V}-\id_{V\ot V})\rangle$.
\item The exterior algebra $\Lambda(V):=T(V)/\langle\operatorname{Im}(\sigma_{V,V} + \id_{V\ot V})\rangle$.
\item The algebra of differential forms $\Omega(V):=S(V)\ot\Lambda(V)$, and its commutative subalgebra of even forms (relative to grading on $\Lambda(V)$).
\end{enumerate}
\end{example}

Recall that we have canonical isomorphisms $s:V\to S^1(V)$ and $\lambda:V\to \Lambda^1(V)$.  The algebra $\Omega(V)$ has a unique derivation $d:\Omega^*(V)\to\Omega^{* + 1}(V)$, such that $d|_{S^1(V)}=\lambda\circ s^{-1}$, and $d|_{\Lambda^1(V)}=0$.  As with ordinary differential forms, $d$ defines a differential: $d^2=0$.  We let $\Omega(V)_{ex}=\operatorname{Im}(d)$ denote the exact forms, and $\Omega(V)_{cl}=\ker(d)$ the closed forms.  By the Poincar\'e lemma, we have an isomorphism $\Omega(V)_{cl}\cong \mathbf{1} \oplus \Omega(V)_{ex}$.

\section{From $\text{Vect}$ to any symmetric tensor category}
All definitions in this section are over a field $k$ of characteristic zero.  For background on symmetric tensor categories and Schur functors, see \cite{D} and \cite{EH}.
\begin{definition} The abelian category of abstract Schur functors is $\mathbf{Sch}:=\oplus_{j\geq 0} \textrm{Rep}(S_j)$.  The tensor product of $V\in \textrm{Rep}(S_j)$ and $W\in\textrm{Rep}(S_k)$ is:
$$V\ot W:=\text{Ind}_{S_j\times S_k}^{S_{j+k}} V\bt W,$$
which defines the structure of a tensor category on $\mathbf{Sch}$.  The isomorphisms $S_j\times S_k\cong S_k\times S_j$ induced by $(1,\ldots, j, j+1,\ldots j+k)\mapsto (j+1,\ldots j+k,1,\ldots j)$ make $\mathbf{Sch}$ a symmetric tensor category.
\end{definition}

As an abelian category, $\mathbf{Sch}$ is filtered by its full subcategories $\mathbf{Sch}_N:=\oplus_{j\leq N}\textrm{Rep}(S_j)$, and is semi-simple, with simple objects $W_\lambda$, where $W_\lambda\in\textrm{Irrep}(S_k)$ is the irreducible indexed by the Young diagram of size $k=|\lambda|$, and $k$ ranges over $\ZZ_{\geq 0}$.

Let $\cC$ be a symmetric tensor category.  We have a bi-functor $F:\mathbf{Sch}\times \cC\to\cC$, sending $(W\in \textrm{Rep}(S_k),V\in\cC)$ to $F_{V}(W):=(W\ot V^{\ot k})^{S_k}\in\cC$.  More precisely, $F_V(W)$ is the image of $e_k=\frac{1}{k!}\sum_{\sigma\in S_k}\sigma\otimes \sigma\in \End_\cC(W\ot V^{\ot k})$.  For a Young diagram $\lambda$ of size $k$, we let $\mathbb{S}_\lambda:=V\mapsto F_{V}(W_\lambda)$, called the irreducible Schur functor in $\cC$ of type $\lambda$.  Then $F_V:\mathbf{Sch}\to\cC$ is a symmetric tensor functor.

For the symmetric tensor category $\textrm{Rep}(GL_n)$ of locally finite $GL_n$-modules, we consider the functors $F_n:=F_{\CC^n}:\mathbf{Sch}\to \textrm{Rep}(GL_n)$.

\begin{claim}\label{fullyfaithful} The family of functors $F_n$ is asymptotically fully faithful.  More precisely, $F_n|_{\mathbf{Sch}_N}$ is fully faithful for $n\geq N$.
\end{claim}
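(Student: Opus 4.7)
The plan is to deduce the claim from classical Schur--Weyl duality. The source $\mathbf{Sch}_N=\bigoplus_{j\leq N}\textrm{Rep}(S_j)$ is a semisimple $k$-linear abelian category whose simple objects are the irreducibles $W_\lambda$ of $S_{|\lambda|}$ for Young diagrams with $|\lambda|\leq N$, and in which Hom between distinct $\textrm{Rep}(S_j)$-summands vanishes by construction. The functor $F_n$ lands in the polynomial subcategory of $\textrm{Rep}(GL_n)$, which is also semisimple with simples the irreducibles $V_\mu^n$ indexed by Young diagrams $\mu$ with $\ell(\mu)\leq n$. A $k$-linear additive functor between semisimple $k$-linear categories is fully faithful if and only if it sends a complete list of non-isomorphic simples to a list of non-isomorphic non-zero simples; so the problem reduces to verifying (i) $F_n(W_\lambda)$ is a non-zero simple $GL_n$-module for every $|\lambda|\leq N$, and (ii) $F_n(W_\lambda)\not\cong F_n(W_\mu)$ whenever $\lambda\neq\mu$ with $|\lambda|,|\mu|\leq N$.

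Next I would identify $F_n(W_\lambda)$ explicitly. Unwinding, $F_n(W_\lambda)=(W_\lambda\otimes(\CC^n)^{\otimes k})^{S_k}$ with $k=|\lambda|$. Schur--Weyl duality gives the $S_k\times GL_n$-bimodule decomposition
\[
(\CC^n)^{\otimes k}\;\cong\;\bigoplus_{\substack{\mu\vdash k\\ \ell(\mu)\leq n}} W_\mu\otimes V_\mu^n,
\]
and tensoring with $W_\lambda$ followed by taking $S_k$-invariants singles out the $\mu=\lambda$ summand by Schur's lemma (using self-duality of $W_\lambda$ in characteristic zero). Thus $F_n(W_\lambda)\cong V_\lambda^n$ when $\ell(\lambda)\leq n$, and is zero otherwise. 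The hypothesis $n\geq N\geq |\lambda|\geq \ell(\lambda)$ guarantees non-vanishing, proving (i), while (ii) is immediate from the parameterization of polynomial irreducibles of $GL_n$ by Young diagrams with at most $n$ rows.

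I do not expect any substantial obstacle here: the entire argument rests on Schur--Weyl duality together with the formal fact that fully faithfulness of an additive functor between semisimple categories can be tested on simples; the compatibility of $F_n$ with morphisms is built into its definition as a functor of tensor-invariants. The one subtlety worth recording is that the bound $n\geq N$ is sharp, since for $n<N$ there exist diagrams $\lambda$ with $\ell(\lambda)>n$ for which $F_n(W_\lambda)=0$, in which case even faithfulness fails on the endomorphism ring of $W_\lambda$.
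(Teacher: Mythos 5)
Your argument is correct and is precisely the intended one: the paper offers no written proof beyond the remark that the claim follows from Schur--Weyl duality, and your reduction to checking that $F_n$ sends the simples $W_\lambda$ (for $|\lambda|\leq N\leq n$) to the pairwise non-isomorphic nonzero simples $V_\lambda^n$ is the standard way to extract full faithfulness from that duality. The one hypothesis worth making explicit is that the simples on both sides are absolutely irreducible over the characteristic-zero field $k$, so that their endomorphism rings are $k$ and full faithfulness can indeed be tested on simples; this holds here since Specht modules and polynomial $GL_n$-irreducibles are defined and absolutely irreducible over $\QQ$.
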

\begin{proof}
Let $V=\CC^n$.  On simples $W_\lambda\in \textrm{Rep}(S_N)$, $F_n(W_\lambda)$ is simply the $W_{\lambda}$-isotypic component of the $S_N$ module $V^{\ot N}$.  By Schur-Weyl duality, this is an irreducible $GL_n$-module $V_{\lambda}$ for $n\geq N$ (while for $N> n$, it may be zero).
We have $$Hom_{GL_n}(V_{\lambda},V_{\mu}) \cong \CC^{\delta_{\lambda,\mu}}\cong \Hom_{S_N}(W_\lambda,W_\mu).$$  
\end{proof}

We will apply this claim in the form of the following:

\begin{proposition}  \label{meta}
\begin{enumerate}\item Suppose we have an identity of Schur functors (up to an isomorphism) that holds in $\textrm{Rep}(GL_n)$, for all $n$.  Then it holds for abstract Schur functors, and hence in any symmetric tensor category.
\item Any morphism of abstract Schur functors which yields an isomorphism (epi, mono, or zero morphism, resp.) in $\textrm{Rep}(GL_n)$ for all $n$ is itself an isomorphism (epi, mono, or zero morphism, resp.), and hence yields an isomorphism (epi, mono, or zero morphism, resp.) in any symmetric tensor category.
\end{enumerate}
\end{proposition}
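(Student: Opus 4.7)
The plan is to reduce both parts to Claim~\ref{fullyfaithful} together with the semisimplicity of $\mathbf{Sch}$, and then to transport the resulting statements from $\mathbf{Sch}$ to any $\cC$ via the evaluation functor $F_V\colon\mathbf{Sch}\to\cC$. Since $F_V$ is additive out of a semisimple abelian category, it is automatically exact, and hence preserves isomorphisms, epis, monos, and zero morphisms. So it suffices to prove both statements inside $\mathbf{Sch}$ itself, testing against the family $\{F_n\}_{n\geq 1}$.

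For part~(1), given $X,Y\in\mathbf{Sch}$ with $F_n(X)\cong F_n(Y)$ in $\text{Rep}(GL_n)$ for every $n$, choose $N$ with $X,Y\in\mathbf{Sch}_N$ and decompose each into irreducibles $\mathbb{S}_\lambda$ using semisimplicity. For $n\geq N$ the objects $F_n(\mathbb{S}_\lambda)=V_\lambda$ are pairwise non-isomorphic nonzero irreducibles of $GL_n$, so the hypothesized isomorphism $F_n(X)\cong F_n(Y)$ forces equality of multiplicities and hence $X\cong Y$ in $\mathbf{Sch}$. Applying $F_V$ then gives the desired isomorphism in $\cC$.

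For part~(2), let $f\colon X\to Y$ be a morphism in $\mathbf{Sch}$, choose $N$ with $X,Y\in\mathbf{Sch}_N$, and fix any $n\geq N$. By Claim~\ref{fullyfaithful}, $F_n|_{\mathbf{Sch}_N}$ is fully faithful, so it reflects isomorphisms and is in particular faithful; thus $F_n(f)$ being an iso (resp.\ zero) immediately implies that $f$ is an iso (resp.\ zero). For the epi and mono cases, since $\mathbf{Sch}$ is abelian semisimple, $f$ is epi (resp.\ mono) iff $\operatorname{coker}(f)=0$ (resp.\ $\ker(f)=0$); exactness of $F_n$ gives $F_n(\operatorname{coker}(f))=\operatorname{coker}(F_n(f))=0$, and any $Z\in\mathbf{Sch}_N$ with $F_n(Z)=0$ satisfies $\End(Z)\cong\End(F_n(Z))=0$ by full faithfulness, forcing $Z=0$. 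Finally, since $F_V$ is exact, applying it transports each of these conclusions from $\mathbf{Sch}$ to $\cC$.

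No step here is technically difficult; the substance lies entirely in Claim~\ref{fullyfaithful}, which rests on classical Schur--Weyl duality. The only conceptual point worth flagging is that the hypothesis in part~(1) asserts isomorphisms $F_n(X)\cong F_n(Y)$ separately for each $n$, not naturally in $n$, but the multiplicity-counting argument carried out inside $\mathbf{Sch}_N$ renders this distinction irrelevant.
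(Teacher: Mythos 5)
Your proposal is correct and follows the same route the paper intends: the paper states Proposition~\ref{meta} as an immediate consequence of Claim~\ref{fullyfaithful} without writing out details, and your argument simply supplies those details (semisimplicity of $\mathbf{Sch}$, exactness of the evaluation functors, and the standard facts that a fully faithful exact functor reflects isomorphisms, epis, monos, and zero morphisms). Nothing is missing; the multiplicity-counting remark correctly disposes of the only subtlety in part (1).
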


A simple, yet important, observation is that the members $L_k:=L_k(T(V))$ of the lower central series, and the quotients $B_k$, as well as all the algebras and Lie algebras constructed in the previous sections, are defined purely in terms of the symmetric group action on various tensor products of an object $V$.  Thus each of these constructions is in fact functorial in $V$, and moreover can be expressed in the basis of Schur functors $\mathbb{S}_\lambda$.

An immediate consequence is that if we can provide a consistent decomposition of $L_i(T(\CC^n))$ into a sum of $\mathbb{S}_\lambda(\CC^n)$, for all $n$, then that formula holds in any symmetric tensor category.  As an application, we have:

\begin{proof}[Proof of Theorem \ref{B1B2B3}]  Both sides of each asserted isomorphism are expressed in terms of the Schur functors $\mathbb{S}_\lambda(V)$.  It is easily checked that the morphisms from \cite{FS} and \cite{AJ} are natural in $V$, and defined in any symmetric tensor category; thus they define a natural transformation of the corresponding functors.  Moreover it is shown that they are natural isomorphisms in the case $\cC=\textrm{Rep}(GL_n)$, for all $n$, and therefore, by Proposition \ref{meta}, in any tensor category.
\end{proof}

\section{Super vector spaces and $A_{m|n}$}
Now we apply the definitions and propositions of the previous section to the setting of super vector spaces.  Let $\cC:=\text{SuperVect}$, and let $\CC^{m|n}\in\cC$ denote the super vector space on even basis $\{x_1,\ldots,x_m\}$ and odd basis $\{y_1,\ldots,y_n\}$. Let $A:=A_{m|n}$ denote the tensor algebra $T(\CC^{m|n})$, and consider the lower central series $L_k(A)$, and the associated graded quotients $B_k(A)$. 

\begin{proof}[Proof of Theorem \ref{B1B2B3HS}]
We compute $h_{B_2}$ first.  By Theorem \ref{B1B2B3}, we have that $B_2$ is the subspace $\Omega^{ev}_{ex}$ of exact super differential forms of even degree:
$$h_{B_2} = h_{\Omega^{ev}_{ex}} = \sum_{i=0}^\infty h_{\Omega^{2i}_{ex}}.$$

The map $d\colon\Omega(\C^{m|n})\to\Omega(\C^{m|n})$ gives rise to the following complex:
$$
\cdots\stackrel{d}{\longrightarrow}\Omega^{i-1}\stackrel{d}{\longrightarrow}\Omega^{i}\stackrel{d}{\longrightarrow}\Omega^{i+1}\stackrel{d}{\longrightarrow}\cdots .
$$
This sequence is exact except for a copy of $\C$ at $i=0$. 
Therefore we obtain the recursion relation:
\begin{align}
  \label{eq:1}
  h_{\Omega^{i}_{ex}} &= h_{\Omega^{i-1}} - h_{\Omega^{i-1}_{ex}}, \,\,(i\geq 2) \\
  h_{\Omega^1_{ex}} &= h_{\Omega^{0}} - 1,\nonumber
\end{align}
which has solution:
\begin{equation}
  \label{eq:2}
  h_{\Omega^{i}_{ex}} = \sum_{j=0}^{i-1}(-1)^{i-j+1}h_{\Omega^j} + (-1)^{i+1}.
\end{equation}
By the tensor decomposition, $\Omega(\C^{m|n}) = S(\C^{m|n})\otimes\Lambda(\C^{m|n})$, we may write:
$$h_\Omega(\mathbf{u},\mathbf{v},t) = \frac{\prod_{j=1}^n(1+v_j)}{\prod_{i=1}^m(1-u_i)}\cdot \frac{\prod_{i=1}^m(1+tu_i)}{\prod_{j=1}^n(1-tv_j)},$$
\noindent where the variable $t$ is a counter for the degree of the form. Thus $h(\Omega^i)$ is the coefficient of $t^i$ in $h(\Omega)$. From equations \eqref{eq:1} and \eqref{eq:2}, we have:
\begin{align*}
h_{\Omega^{2k}_{ex}} &= \sum_{i=0}^{2k-1}(-1)^{i+1}\text{Coeff}_{t^i}(h_\Omega) +1,\\
& = \text{Res}_{t=0}\left(h_\Omega\cdot\frac{1}{t}\cdot\frac{t^{-2k}-1}{1+t^{-1}}\right) +1.
\end{align*}
As the term $(t^{-2k}-1)/(1+t^{-1})$ has a convergent power series for $|t|>1$, we can compute this residue by taking a contour integral around a circle $\gamma$ with centre at the origin and radius greater than $1$: 
$$
h_{\Omega^{2k}_{ex}} =\frac{1}{2\pi i}\int_{\gamma}\left(h_\Omega\cdot\frac{-1}{1+t}\right) +\frac{1}{2\pi i}\int_{\gamma}\left(h_\Omega\cdot\frac{t^{-2k}}{1+t}\right) +1.
$$

An elementary computation shows that
$$
\frac{1}{2\pi i}\int_{\gamma}\left(h_\Omega\cdot\frac{-1}{1+t}\right) = -1,
$$
so we find
$$
  h_{\Omega^{2k}_{ex}} = \frac{1}{2\pi     i}\int_{\gamma}\left(h_\Omega\cdot\frac{t^{-2k}}{1+t}\right),
$$
and thus:
$$
h_{\Omega^{ev}_{ex}} =\sum_{k\geq 1}h_{\Omega^{2k}_{ex}}= \frac{1}{2\pi i}\int_{\gamma}\left(h_\Omega\cdot\frac{1}{(1+t)(1-t^{-2})}\right) -1.
$$

The integrand has a simple pole at $t=1$ and a double pole at $t=-1$. Calculating the integral by the method of residues yields us the formula asserted for $h_{B_2}$.

We can compute the Hilbert series of $\Omega^{ev}$ directly:
\begin{align*}h_{\Omega^{ev}} &= \frac12(h_{\Omega}(\mathbf{u},\mathbf{v},1) + h_{\Omega}(\mathbf{u},\mathbf{v},-1))\\
&=\frac12\cdot\frac{\prod_{j=1}^n(1+v_j)}{\prod_{i=1}^m(1-u_i)}\cdot 
(\frac{\prod_{i=1}^m(1+u_i)}{\prod_{j=1}^n(1-v_j)} + \frac{\prod_{i=1}^m(1-u_i)}{\prod_{j=1}^n(1+v_j)})\\
&=\frac12\cdot( \mathbf{P}_{m|n} + 1).
\end{align*}
Thus, we compute:
\begin{align*} 
h_{\overline{B}_1}= h_{\Omega^{ev}} - h_{B_2} = \frac{1}{4}(\mathbf{P}_{m|n} + \sum_{i=1}^m\frac{u_i}{2(1-u_i)} + \sum_{j=1}^n\frac{v_j}{2(1+v_j)} + 3),
\end{align*}
as asserted.

Finally, we can compute the Hilbert series of $B_3$ from its tensor decomposition in Theorem \ref{B1B2B3}.  We have:
$$h_{\mathbb{S}_{(2,1^k)}(V)} = h_V\cdot h_{\mathbb{S}_{(1^{k+1})}(V)} - h_{\mathbb{S}_{(1^{k+2})}(V)},$$
from which we compute:
\begin{align*}
h_{B_3}&= h_{S(V)}\left(h_V \sum_{k=1}^\infty h_{\mathbb{S}_{(1^{2k})}} - \sum_{k=1}^\infty h_{\mathbb{S}_{(1^{2k+1})}}\right)\\
&= h_V(h_{\Omega^{ev}}-1) - (h_{\Omega^{odd}}-h_V)\\
&= \frac12((\sum_i u_i + \sum_j v_j)(\mathbf{P}_{m|n} +1) - (\mathbf{P}_{m|n} -1)).
\end{align*}
\end{proof}

\section{Relations between $M_j$ and $L_k$}
All statements in this section, as well as Lemma \ref{B4cor} and Theorem \ref{Bmcor} apply to any symmetric tensor category, while proofs are carried out only in the category of vector spaces.  The proofs in general follow by application of Proposition \ref{meta}.
\subsection*{Notational conventions}
We have to write many expressions involving iterated commutators:  when it is clearer, we omit brackets and adopt the convention of right-iterated bracketing:  $$[x_1,\ldots,x_n] := [x_1,[x_2,[\cdots,[x_{n-1},x_n]\cdots].$$  We let ``$\star$'' denote the symmetric product:  $a\star b = \frac12(ab + ba)$.  Let $Sym(X)$ denote the symmetric group on the set $X$, and let
$$G:=Sym(\{x,y,v\})\times Sym(\{u,z\})\subset Sym(\{x,y,z,u,v\}), \textrm{ and}$$
$$\textrm{Alt}_G = \sum_{g\in G} \sgn(g) g\in \CC[G].$$

\begin{proof}[Proof of Theorem \ref{mainident}]
The proof is a double induction, first on $k$, then on $j$.  The base of induction is $(j,k)=(3,1)$, as the trivial case $(1,1)$ is not sufficient for the induction step.
\begin{lemma}\label{keylemma}$[M_3,L_1] \subset L_4$.\end{lemma}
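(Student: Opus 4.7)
The plan is to reduce the lemma to the case $\cC = \Vect$ with $A = T(V)$ for a finite-dimensional vector space $V$ by Proposition \ref{meta}, and then carry out an explicit multilinear computation. Since $M_3 = AL_3$ is spanned by products $u\ell$ with $u \in A$ and $\ell \in L_3$, and $L_3$ is spanned by triple commutators $[x, [y, z]]$, it suffices to verify that $[u \cdot [x, [y, z]], v] \in L_4$ for any generators $u, v, x, y, z$ of $V$.

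First I would apply the Leibniz rule to obtain
\[
[u \cdot [x, [y, z]], v] = u \cdot [[x, [y, z]], v] + [u, v] \cdot [x, [y, z]].
\]
The first summand lies in $A \cdot L_4$ and the second in $L_2 \cdot L_3$; neither of these containment spaces is itself a subset of $L_4$, so the essence of the lemma is to exhibit a cancellation between the two types of terms. The available manipulations are the identity $ab = ba + [a, b]$ together with the inclusions $[A, L_4] \subset L_5 \subset L_4$ and $[L_2, L_3] \subset L_5 \subset L_4$, which permit rearrangement of factors modulo $L_4$ at no cost.

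The heart of the argument is then to expand the inner brackets repeatedly via the Jacobi identity, and to organize the resulting terms using the alternating projection $\text{Alt}_G$ with $G = Sym(\{x, y, z\}) \times Sym(\{u, v\})$. The Jacobi relation $[x, [y, z]] + [y, [z, x]] + [z, [x, y]] = 0$ forces $[x, [y, z]]$ to lie in the Specht $(2,1)$-component of $V^{\ot 3}$, while the symmetric product $\star$ controls the $S_2$-isotypes in $\{u, v\}$; applying $\text{Alt}_G$ should rewrite $[u \cdot [x, [y, z]], v]$ modulo $L_4$ as a sum of fourfold iterated commutators $[p, [q, [r, s]]]$, each visibly in $L_4$.

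The main obstacle is the combinatorial bookkeeping: every application of Jacobi generates several terms, and keeping track of which end up in $L_4$ after the various rewrites demands care. The symmetric product $\star$ and the projection $\text{Alt}_G$ from the notational conventions are evidently designed precisely for this bookkeeping; concretely, I would rewrite each associative product $ab$ as $a \star b + \tfrac{1}{2}[a, b]$ to cleanly separate the commutative and Lie contributions, then verify a single multilinear identity on the $\text{Alt}_G$-projected expression. Once this base case is established, Theorem \ref{mainident} follows by the double induction on $k$ and $j$ promised in the text.
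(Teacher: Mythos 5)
You have assembled the right ingredients (monomial generators of $[M_3,L_1]$, the passage from the associative product to $\star$ modulo higher terms, the Jacobi identity, and the projector $\textrm{Alt}_G$), and your reduction via Proposition \ref{meta} matches the paper's framework. But the proposal stops exactly where the proof has to start, and it misreads the logical role of $\textrm{Alt}_G$. In the paper's argument $\textrm{Alt}_G$ is not a bookkeeping device for Jacobi expansions; it enters through a two-step implication. Writing $[y,z,u]$ for $[y,[z,u]]$, one first replaces $[x[y,z,u],v]$ by $[x\star[y,z,u],v]$ (the difference lies in $[L_4,L_1]\subset L_5$, as you note), and then proves via two explicit identities, namely $[x\star[y,z,u],v]+[y\star[x,z,u],v]=[[x\star y,z,u],v]\in L_4$ and $[x\star[y,z,u],v]+[v\star[y,z,u],x]=-[x\star v,y,z,u]\in L_4$, that the expression $[x\star[y,z,u],v]$ is \emph{alternating} under $G=Sym(\{x,y,z\})\times Sym(\{u,v\})$ modulo $L_4$. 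Only because of this antisymmetry does $\textrm{Alt}_G$ act as multiplication by $|G|=12$ on the expression modulo $L_4$, so that membership of the original element in $L_4$ is equivalent to membership of its $G$-alternation. Without that step, showing $\textrm{Alt}_G[x\star[y,z,u],v]\in L_4$ would tell you nothing about $[x\star[y,z,u],v]$ itself, and your plan never supplies it.

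The second missing piece is the actual identity that finishes the proof: $\textrm{Alt}_G[x\star[y,z,u],v]=\textrm{Alt}_G[u,x,y,z\star v]$, verified directly on the coefficients of the $\binom{5}{2}=10$ monomials singled out by the symmetrization, whose right-hand side is a fourfold iterated bracket and hence manifestly in $L_4$. This is not obtained by "repeatedly expanding via Jacobi" until things happen to land in $L_4$ — your own Leibniz decomposition shows the two types of terms produced that way do not separately lie in $L_4$, so some exact cancellation identity must be exhibited. These two computations are the entire content of the lemma; since the proposal defers both ("should rewrite", "demands care"), it is a plausible strategy outline rather than a proof.
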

\begin{proof} 
We show that $[x[y,z,u],v]\in L_4$;  letting $x,y,z,u,v$ range over all monomials, we span $[M_3,L_1]$, proving the lemma.  We may replace $[x[y,z,u],v]$ by $[x\star[y,z,u],v]$, as they are congruent modulo $L_5$.  The following two identities are straightforward:
$$ [x\star[y,z,u],v]+[y\star[x,z,u],v] = [[x\star y,z,u],v] \in L_4$$
$$[x\star[y,z,u],v]+[v\star[y,z,u],x]=-[x\star v,y,z,u]\in L_4.$$
Together they imply that $[x\star[y,z,u],v]$ is alternating for $G$, modulo $L_4$.  Finally, we have the following identity, which may be verified directly on coefficients of the ${5\choose 2}=10$ monomials where $x$ is left of $y$ is left of $v$ and $z$ is left of $u$:
$$\textrm{Alt}_G [x\star[y,z,u],v] = \textrm{Alt}_G(4[z\star x,y,v,u] - 2[x,z,y,u\star v]).$$
We conclude that $[x[y,z,u],v]\in L_4$.  
\end{proof}
\begin{lemma} $[M_3,L_k]\subset L_{k+3}$.\end{lemma}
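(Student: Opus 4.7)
My plan is a single induction on $k$, with the base case $k=1$ already provided by Lemma \ref{keylemma}. The inductive step should be a one-line Jacobi identity manipulation, after I first note that $L_k$ is spanned (in the right-iterated form $[x_1,[x_2,[\cdots,[x_{k-1},x_k]\cdots]]]$ coming from the very definition $l_k = l_{k-1}\circ(\id^{k-2}\otimes l_2)$) by elements of the form $[b, d]$ with $b \in A$ and $d \in L_{k-1}$.

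For the step, I fix $k \geq 2$ and assume as the induction hypothesis that $[M_3, L_{k-1}] \subset L_{k+2}$. Taking $a \in M_3$ and a typical spanning element $[b, d]$ of $L_k$, Jacobi gives
\[
[a, [b, d]] = [[a, b], d] + [b, [a, d]].
\]
The base case ($k=1$) yields $[a, b] \in [M_3, L_1] \subset L_4$, while the induction hypothesis yields $[a, d] \in [M_3, L_{k-1}] \subset L_{k+2}$. Applying the standard nesting $[L_i, L_j] \subset L_{i+j}$ to each summand then places $[[a,b],d] \in [L_4,L_{k-1}] \subset L_{k+3}$ and $[b,[a,d]] \in [L_1, L_{k+2}] \subset L_{k+3}$, closing the induction.

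The only ingredient I use that is not completely obvious is the standard nesting $[L_i, L_j] \subset L_{i+j}$ in the symmetric tensor category setting. This is proved in $\mathrm{Vect}$ by a short auxiliary Jacobi induction on $i$ (with base $i=1$ being the definition $L_{j+1} = [L_1, L_j]$, and step identical to the manipulation above), and since the argument manipulates only symmetric-group-equivariant morphisms, Proposition \ref{meta} transfers it to any $\cC$. I do not expect any genuinely hard step here: the combinatorial difficulty was entirely concentrated in the base case Lemma \ref{keylemma} (the $\mathrm{Alt}_G$ identity on ten monomials), and the present inductive upgrade is purely formal.
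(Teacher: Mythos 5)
Your proof is correct and follows essentially the same route as the paper: the paper likewise writes a spanning element of $L_k$ as $[x,y]$ with $x\in L_1$, $y\in L_{k-1}$, applies the Jacobi identity to $[a[b,c,d],[x,y]]$, and invokes the base case together with the induction hypothesis and the nesting $[L_i,L_j]\subset L_{i+j}$. Your explicit justification of that nesting (which the paper leaves implicit, though it follows from its proposition that any iterate of $\mu$ lands in the appropriate $L_k$) is a welcome touch but not a departure.
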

\begin{proof} Consider $[a[b,c,d],l]$, with $l \in L_k$.  Write $l=[x,y]$, where $x\in L_1, y\in L_{k-1}$.  We have $$[a[b,c,d],[x,y]] = [[a[b,c,d],x],y]+ [x,[a[b,c,d],y]]\subset L_{k+3},$$  by induction.
\end{proof}
To conclude the proof of the theorem, we consider a general element $[m,l]$ with $m\in M_j$, and $l=[l_1,\ldots,l_k]\in L_k$. Write $m= a[b,c,d]$, where $d\in L_{j-2}$.  Then by Lemma \ref{keylemma}, we have that $[a[b,c,d],l] \subset L_{k+3}(a,b,c,d,l_1,\ldots l_k)$, meaning that we regard $d$ as a variable instead of an iterated bracket.  Each summand in $L_{k+3}$ is an iterated bracket of linear and quadratic expressions in the variables $a,b,c,d,l_1,\ldots l_k$.  By repeated application of the Jacobi identity, we may assume that all summands are of the form $[y_1,\ldots,y_{k+2},y_{k+3}d],$ for various permutations $\{y_1,\ldots, y_{k+3}\}=\{a,b,c,l_1,\ldots,l_k\}$.  We now plug in $d=[d_1,\ldots, d_{j-2}]$.  Then $[y_{k+2},y_{k+3}d]\subset L_{j-1}$ by induction, which concludes the proof of the theorem.
\end{proof}

As a corollary, we can give an affirmative answer to the ``if" direction of Conjecture 3.6 from \cite{EKM}.

\begin{proof}[Proof of Corollary \ref{mjmkcor}] We may assume $k$ is odd.  Clearly it is enough to show that $L_jL_k\subset M_{j+k-1}$.  Let $a \in A, x\in L_{j-1}, y\in L_k$.  Then we have $[x,a]y=[x,ay]-a[x,y]$.
The LHS is a completely general generator of $L_jL_k$. By Theorem \ref{mainident}, both the terms on the RHS are in $M_{j+k-1}$.
\end{proof}
\section{Applications of Theorem \ref{mainident} to the modules $B_k$}
\begin{lemma}\label{B4cor} $B_4 = [A_{\leq 2},B_3].$\end{lemma}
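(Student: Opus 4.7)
The reverse inclusion $[A_{\leq 2}, B_3] \subseteq B_4$ is immediate from $[L_1, L_3] \subseteq L_4$, so the content lies in showing $B_4 \subseteq [A_{\leq 2}, B_3]$. The plan is to progressively reduce the ``outside'' factor in the tautology $B_4 = [A, B_3]$.

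The first reduction invokes Theorem \ref{mainident} with $j = k = 3$: since $[M_3, L_3] \subseteq L_6 \subseteq L_5$, the ideal $M_3$ acts trivially on $B_3$ inside $B_4$, so $B_4 = [A/M_3, B_3]$. Under the isomorphism $\phi: A/M_3 \xrightarrow{\sim} \Omega^{ev}_{\ast}$ of Theorem \ref{thm:fsisomorphism}, this becomes $B_4 = [\Omega^{ev}_\ast, B_3]$. The next key observation is that $\Omega^{ev}_\ast$ is generated as a Fedosov-product algebra by the image of $A_{\leq 2}$: this image contains both the linear coordinates $x_i$ and the two-forms $dx_i\wedge dx_j$, since $\phi([x_i, x_j]) = 2\,dx_i\wedge dx_j$ and $[x_i, x_j] \in A_{\leq 2}$.

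The central technical tool is a ``symmetric Leibniz'' congruence
\[
[c_1 c_2, l] \equiv c_1[c_2, l] + c_2[c_1, l] \pmod{L_5}, \qquad c_1, c_2 \in A,\ l \in L_3,
\]
which follows from the standard derivation property $[c_1 c_2, l] = c_1[c_2, l] + [c_1, l]c_2$ together with the centrality $[A, L_4] \subseteq L_5$ (giving $[c_1, l]c_2 \equiv c_2[c_1, l] \pmod{L_5}$). Iterating this congruence on a lift $c = a_1 a_2 \cdots a_d$ of $\omega \in \Omega^{ev}_\ast$ with each $a_i \in A_{\leq 2}$ rewrites $[c, l]$ modulo $L_5$ as a sum $\sum_{j=1}^d \hat c_j [a_j, l]$, where $\hat c_j := \prod_{i \neq j} a_i \in A$.

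The main obstacle is that the individual summands $\hat c_j [a_j, l]$ lie in $A \cdot L_4 \subseteq M_4$ rather than in $L_4$; only the total sum lies in $L_4$. To realize this sum as a genuine element of $[A_{\leq 2}, L_3]$ modulo $L_5$, I would apply the reverse Leibniz identity $\hat c_j [a_j, l] = [a_j, \hat c_j l] - [a_j, \hat c_j]\, l$ and carefully control all error terms: the products $[a_j, \hat c_j]\, l$ lie in $L_2 \cdot L_3$ and can be absorbed modulo $L_5$ using the relations $M_j M_k \subseteq M_{j+k-1}$ (for odd $j$) and $[L_j, L_k] \subseteq L_{j+k}$, while the commutators $[a_j, \hat c_j l]$ with $\hat c_j l \in M_3$ are handled by repeated appeals to Theorem \ref{mainident} (notably its $j=3$, $k=1$ case $[M_3, L_1] \subseteq L_4$). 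Closing this bookkeeping modulo $L_5$ is straightforward in spirit but constitutes the technical heart of the proof.
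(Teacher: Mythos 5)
Your first reduction ($[M_3,L_3]\subseteq L_6$, hence $B_4=[A/M_3,B_3]$) is fine, but the rest of the argument has a genuine gap --- in fact a circularity --- exactly at what you call its ``technical heart''. First, the iterated symmetric Leibniz congruence already fails at the stated precision once $d\geq 3$: the exact expansion is $[a_1\cdots a_d,l]=\sum_j(a_1\cdots a_{j-1})[a_j,l](a_{j+1}\cdots a_d)$, and commuting the right-hand factor past $[a_j,l]\in L_4$ produces errors of the form $(a_1\cdots a_{j-1})[[a_j,l],a_{j+1}\cdots a_d]\in A\cdot L_5=M_5$, which is \emph{not} contained in $L_5$. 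Second, and more seriously, the final step does not close. In $\hat c_j[a_j,l]=[a_j,\hat c_j l]-[a_j,\hat c_j]\,l$, the term $[a_j,\hat c_j l]$ has $\hat c_j l\in M_3$ but not in $L_3$, so it is not an element of $[A_{\leq 2},L_3]$; Theorem \ref{mainident} only tells you it lies in $L_4$, and ``lies in $L_4$'' is precisely where you started --- the content of the lemma is that $L_4\subseteq[A_{\leq 2},L_3]+L_5$. Likewise $[a_j,\hat c_j]\,l\in L_2\cdot L_3\subseteq M_4$, and $M_4$ is contained neither in $L_5$ nor even in $L_4$, so it cannot simply be ``absorbed modulo $L_5$''. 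The irreducible difficulty --- showing that $c\,[a,l]$ for arbitrary $c\in A$ lies in $[A_{\leq 2},L_3]+L_5$ --- is essentially equivalent to the original problem, and the proposed bookkeeping does not resolve it.

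For comparison, the paper does not attempt this reduction from scratch: it imports from \cite{AJ} the nontrivial spanning statement $B_4=[A_{\leq 2},B_3]+\sum_{x,y,z\in A_1}[x[y,z],B_3]$, so that only outer factors of the special cubic form $x[y,z]$ with $x,y,z$ linear remain to be handled. These are then eliminated by one application of Jacobi, the congruence $[x[y,z],u]\equiv[x,y][z,u]\pmod{M_3}$, and Theorem \ref{mainident}, which rewrites $[[x[y,z],u],v,w]$ modulo $L_5$ as brackets whose outermost entries are quadratic. To salvage your approach you would need either to reprove that spanning result or to produce a genuinely new identity placing $c\star[a,l]$ in $[A_{\leq 2},L_3]+L_5$; neither is supplied by the argument as written.
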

\begin{proof}
In \cite{AJ}, it has been shown that
$$B_4 = [A_{\leq 2},B_3]+ \sum_{x,y,z\in A_1} [x[y,z],B_3].$$
Thus our task is to show that each summand $[x[y,z],B_3]$ is actually contained in the span of iterated brackets with only quadratic or linear outermost expression.
We consider a general element of this form, $[x[y,z],u,v,w]$, where we assume that $u$ and $v$ are either linear or quadratic ($w$ is arbitrary).  Then we have:
$$[x[y,z],u,v,w]=[u,x[y,z],v,w]+[[x[y,z],u],v,w].$$
The Leibniz rule implies that $[x[y,z],u]-[x,y][z,u]\in M_3$.  Thus by Theorem \ref{mainident}, we have:
\begin{align}[[x[y,z],u],v,w]&=[[x,y][z,u],[v,w]]=[[z,u],[x,y][v,w]]+[[x,y],[z,u][v,w]]\nonumber\mod L_5\\
&=[[z,u],[x,y[v,w]]] + [[x,y],[z,u[v,w]]] \mod L_5.\label{B4eqn}\end{align}
We now observe that each of the above expressions has only a single term of degree higher than two.  By repeated use of Jacobi identity, we can put that term in the innermost slot.  We have established the corollary.
\end{proof}

\begin{proof}[Proof of Corollary \ref{Bmcor}]
\textbf{Case 1: m is odd.}

Let us consider the element $[x[y,z],w,v] \in B_k$, where $v=[v_1,\ldots,v_{k-2}]$ is some iterated bracket expression.  By Lemma 5.2 of \cite{AJ}, we have:
$$[x[y,z], [w, v]] - [x, [w[y, z], v]] + [y, [w[x, z], v]] - [z ,[w[x, y], v]] \in L_4(x,y,z,w,v),$$
meaning that we regard $v$ as a variable, rather than as an iterated commutator.  Each summand in $L_4$ is necessarily an iterated bracket of linear and quadratic expressions in the variables $x,y,z,w,v$.  By repeated application of the Jacobi identity, we may assume that all summands are of the form $[a_1,\ldots,a_4v]$ for various permutations $\{a_1,\ldots,a_4\} = \{x,y,z,w\}$. We now plug in $v=[v_1,\ldots,v_{k-2}]$. Theorem \ref{mainident} implies $[a_3,a_4v]\in L_{k-1}$, which concludes the proof.

\textbf{Case 2: m is even.}

We have already addressed the case $k=4$. Let us consider the element $[x[y,z],u,v,w]$ of $B_k$, where $w=[w_1,\ldots,w_{k-3}]$.  We may repeat the arguments of Lemma \ref{B4cor} up to equation \eqref{B4eqn} without significant modification.  By Theorem \ref{mainident}, both terms on the RHS are in fact zero in $B_k$, which concludes the proof.
\end{proof}

\begin{proof}[Proof of Corollary \ref{thm:strongerlambdabound}] The proof of \cite{AJ}, Lemma 5.4, applies here without significant modification.\end{proof}

\begin{proof}[Proof of Corollary \ref{cor:lambdabound}]
First, let $n=2$; the first corollary implies that if $\mathcal{F}_\lambda$ occurs in $B_k(A_2)$, then $|\lambda|\leq 2k-3$. Moreover, the modules $\mathcal{F}_{(r)}$ do not appear in $B_k(A_2)$, for $k\geq 2$, as $\CC[z_1]$ is a commutative algebra. Thus $\lambda$ must have at least two rows, so that $|\overline{\lambda}|\leq 2k-5$.

We proceed by induction on $n$. Consider some $\flambda$ which occurs in $B_k(A_n)$. If $\lambda$ has less than $n$ rows, let $\mu = (\lambda_1,\ldots,\lambda_{n-1})$, then it follows that $\mathcal{F}_\mu$ occurs in $B_k(A_{n-1})$, so that $|\overline{\lambda}|=|\overline{\mu}|\leq 2k-5$, by the induction hypothesis.  Thus we may assume $\lambda$ has exactly $n$ rows.  Then the first corollary implies:
\[
|\overline{\lambda}| = |\lambda|-n \leq 2k-3 + 2\left\lfloor \frac{n-2}{2}\right\rfloor -n \leq 2k-5.
\]
\end{proof}

Let $\lambda$ be a Young diagram, and recall that $\mathcal{G}_\lambda(V)=\mathbb{S}_\lambda(V)\ot\Omega^{ev}(V)$. Proposition \ref{glambdabound} now follows, using the techniques of Section 3 and 4.

For a graded vector space $M$, recall that $M[d]$ denotes the $d$th graded component.  Then we have:

\begin{proof}[Proof of Corollary \ref{weakbound}] By Lemma \ref{B4cor} and Theorem \ref{Bmcor}, we have a surjections $A_{\leq 2}^{\ot k-2}\ot\Omega^{ev}\twoheadrightarrow B_k$.  $A_{\leq 2}^{\ot k-2}$ is finite dimensional, while $\dim\Omega^{ev}[d]$ is a polynomial of degree $m+n-1$ in $d$.
\end{proof}

\section{Open questions and conjectures}
While Corollary \ref{weakbound} states that the $B_k(A_{m|n})$ exhibit polynomial growth, computer experiments suggest something stronger:

\begin{conjecture}
For all algebras $A_{m|n}$, the Hilbert series of $B_k$, $k\geq 3$ are rational functions with denominator $\prod_i(1-u_i)\prod(1-v_j)$.
\end{conjecture}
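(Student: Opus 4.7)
The plan is to realize $B_k$ as a finitely generated graded module over $\Omega^{ev}(\CC^{m|n})$, and then read off the denominator from the structure of the module. By Theorem \ref{thm:fsisomorphism}, the quotient algebra $A/M_3 \cong \Omega^{ev}$ acts by left multiplication on each $B_k$ for $k\geq 2$. First I would verify that the surjection $\bigoplus_{\lambda \in \Lambda_k}\mathcal{G}_\lambda \twoheadrightarrow B_k$ of Proposition \ref{glambdabound} is $\Omega^{ev}$-equivariant, where $\Omega^{ev}$ acts on the source $\mathcal{G}_\lambda = \mathbb{S}_\lambda \otimes \Omega^{ev}$ by right multiplication on the second tensor factor. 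This equivariance should be built into the construction, since the surjection is a composition of an iterated-commutator map (landing in $B_k$) with left multiplication by $\Omega^{ev} \cong A/M_3$.

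Granting equivariance, $B_k$ becomes a finitely generated graded module over the super-commutative, finitely generated, hence Noetherian, algebra $\Omega^{ev}$. Applying the multi-graded Hilbert--Serre theorem, $h_{B_k}$ is then a rational function whose denominator divides $\prod_g(1 - \mathbf{u}^{\deg g})$ for any set of algebra generators $g$ of $\Omega^{ev}$. Taking $\{x_i\} \cup \{y_j\}$ together with the form-degree-two generators $dx_i dx_j$, $dx_i dy_j$, $dy_j dy_k$, this yields the desired factors $\prod_i(1-u_i)\prod_j(1-v_j)$ together with additional binomial factors of the form $(1-u_iu_j)$, $(1-u_iv_j)$, $(1-v_jv_k)$. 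The content of the conjecture is that these extra factors are spurious and cancel against the numerator, exactly as they do for $\Omega^{ev}$ itself (whose Hilbert series $\tfrac{1}{2}(\mathbf{P}_{m|n}+1)$ has reduced denominator precisely $\prod_i(1-u_i)\prod_j(1-v_j)$).

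The main obstacle is establishing this cancellation. One concrete route is to analyze the kernel of $\bigoplus_\lambda \mathcal{G}_\lambda \twoheadrightarrow B_k$ as an $\Omega^{ev}$-module and show that it admits a presentation of the same form, so that the quotient inherits the clean denominator; this amounts to proving $h_{B_k}(u,v) = Q_k(u,v) \cdot h_{\Omega^{ev}}(u,v)$ for some polynomial $Q_k$. A more conceptual alternative, modeled on the even case \cite{FS}, would be to define super-tensor-field modules $\flambda^{m|n}$ over the Lie super-algebra $W_{m|n}$ of polynomial super-vector fields, extend the $W_n$-action on $B_k$ to a $W_{m|n}$-action, and prove a super-analog of Rudakov's Theorem \ref{thm:rudakovwn} showing that $B_k$ has a finite composition series whose factors are various $\flambda^{m|n}$, each having Hilbert series with the desired denominator by construction. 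Either approach requires a substantial extension of the methods of the paper, which is why the statement remains at the level of a conjecture.
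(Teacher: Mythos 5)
The statement you are addressing is one of the open conjectures of Section 8: the paper offers no proof, only the observation that the completely even case is a theorem of \cite{DE} and that the general case would follow from Conjecture \ref{glambdaconj}. You are candid that your sketch does not close the gap either, which is the right assessment; but the one concrete reduction you do propose rests on a false premise. You claim that $A/M_3\cong\Omega^{ev}_\ast$ acts on each $B_k$ by left multiplication, making $B_k$ a finitely generated graded $\Omega^{ev}$-module. Left multiplication by $a\in A$ sends $L_k$ into $M_k=AL_k$, not into $L_k$, so no such action on $B_k=L_k/L_{k+1}$ is defined; the object that genuinely carries a multiplication action of $A/M_3$ is $N_k=M_k/M_{k+1}$ (via $M_3M_k\subset M_{k+2}$, from the corollary to Theorem \ref{mainident}). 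Already for $k=2$ one sees the problem concretely: under Theorem \ref{thm:fsisomorphism}, $B_2$ corresponds to $\Omega^{ev}_{ex}\subset\Omega^{ev}_\ast$, and $x\ast(dy\wedge dz)=x\,dy\wedge dz$ is not closed, hence not exact, so $\Omega^{ev}_{ex}$ is not stable even under multiplication by $\Omega^0=S(V)$. Relatedly, the surjection of Proposition \ref{glambdabound} is manufactured from Theorem \ref{Bmcor} out of iterated brackets $[a_1,\ldots,a_{k-2},\omega]$ with $a_i\in A_{\leq 2}$, not out of multiplication, so there is no equivariance to verify: the $\Omega^{ev}$-module structure on $\mathcal{G}_\lambda=\mathbb{S}_\lambda\otimes\Omega^{ev}$ simply has no counterpart on the target. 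Constructing a finitely generated $S(\CC^{m|n})$-module structure on $B_k$ compatible with the grading is essentially the entire difficulty, and is exactly what the Jordan--H\"older description by tensor field modules $\flambda$ supplies in the even case.

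Even if such a module structure existed, your own accounting shows that Hilbert--Serre would only yield a denominator $\prod_i(1-u_i)\prod_j(1-v_j)$ multiplied by spurious factors $(1-u_iu_j)$, $(1-u_iv_j)$, $(1-v_jv_k)$ from degree-two generators, and the cancellation of these is the content of the conjecture rather than a consequence of finite generation. Your second, representation-theoretic route is closer in spirit to what the paper envisions, but note the obstruction recorded in the Remark following the $\widetilde{W_n}$ proposition: $W_{0|n}$ is finite dimensional while $\widetilde{W_{0|n}}$ is not, so a naive super-analogue of Theorem \ref{thm:rudakovwn} is not available off the shelf. The paper's proposed path instead runs through Conjecture \ref{glambdaconj}, to be approached via the injectivity of the operator $D$ of Conjecture \ref{neweuler} together with the uniform multiplicity bounds coming from Proposition \ref{glambdabound}.
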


In the completely even case, this is a theorem of \cite{DE}, following from the description of $B_k$ as a finite iterated extension of the modules $\flambda$.  In general, this conjecture would follow as a corollary from the following strengthening of Proposition \ref{glambdabound}:  
\begin{conjecture}\label{glambdaconj}
Let $k\geq 3$. There exists a finite collection $\Lambda$ of Young diagrams $\lambda$, and an isomorphism of abstract Schur functors $V\mapsto B_k(T(V))\cong \oplus_{\lambda\in\Lambda}\mathcal{G}_\lambda$.
\end{conjecture}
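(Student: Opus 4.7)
The plan is to prove the conjecture by reducing to the concrete setting $\cC = \Vect$ with $V = \CC^n$ for $n$ large, and then analyzing the $W_n$-module structure on $B_k(A_n)$. By Proposition \ref{meta}, an isomorphism of abstract Schur functors follows once we establish an isomorphism $B_k(T(\CC^n)) \cong \bigoplus_{\lambda \in \Lambda} \mathcal{G}_\lambda(\CC^n)$ in $\Rep(GL_n)$ for all sufficiently large $n$, with the same finite set $\Lambda$. The bounds of Corollaries \ref{thm:strongerlambdabound} and \ref{cor:lambdabound} guarantee this stabilization: $|\overline{\lambda}| \leq 2k-5$ controls the columns past the first, so the candidate set of diagrams is finite and stabilizes once $n$ exceeds $2k-5$.

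First I would pin down $\Lambda$ and construct a candidate natural transformation $\Phi_k \colon \bigoplus_{\lambda \in \Lambda_k} \mathcal{G}_\lambda \to B_k$. Proposition \ref{glambdabound} already gives a surjection of this form; the task is to refine the choice of $\Lambda_k$ (with multiplicities) so that the map becomes an isomorphism. A natural inductive construction uses Theorem \ref{Bmcor}: starting from the base case $B_3 \cong \mathcal{G}_{(2,1)}$ furnished by Theorem \ref{B1B2B3}, one would compute $[A_{\leq 2}, \mathcal{G}_\mu]$ on each summand of a conjectural decomposition of $B_{k-1}$ and verify that it decomposes as a specific direct sum of $\mathcal{G}_\nu$'s governed by a Pieri-type rule on the parameters $\nu$. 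This recursion would simultaneously determine $\Lambda_k$ and furnish $\Phi_k$.

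The crucial step is to show $\Phi_k$ is injective. The most natural route passes through the $W_n$-action: Rudakov's theorem (Theorem \ref{thm:rudakovwn}) furnishes a finite Jordan--Hölder series of $B_k(A_n)$ with composition factors $\flambda$, and the finer Schur-functor decomposition $\mathcal{G}_\lambda(\CC^n) = S(\CC^n) \otimes \bigoplus_{j \geq 0} \mathbb{S}_{(\lambda, 1^{2j})}(\CC^n)$ should match the Jordan--Hölder multiplicities via a direct identification of $\bigoplus_{j \geq 0} \mathbb{S}_{(\lambda, 1^{2j})}(\CC^n)$ with a canonical $GL_n$-summand appearing in the restriction of $\flambda$ from $W_n$ to $GL_n$. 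Once multiplicities match on the nose and surjectivity is in hand, $\Phi_k$ must be an isomorphism. An alternative, purely computational route is to compute Hilbert series of both sides: for the source, extending Theorem \ref{B1B2B3HS} via closed-form expressions for $\mathcal{G}_\lambda$; for the target, using character formulas for $W_n$-modules.

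The main obstacle, implicit in the conjecture's status as an open problem, is to verify that the $W_n$-module extensions appearing in the Jordan--Hölder series of $B_k(A_n)$ actually split---or, in the abstract Schur-functor formulation, to rule out non-trivial Yoneda extensions of $\mathcal{G}_\mu$ by $\mathcal{G}_\nu$ inside $B_k$. For small $k$ and small $n$ this can be checked against the experimental Hilbert-series data alluded to in Section 8; the general case likely requires either a homological vanishing statement for $\operatorname{Ext}^1_{W_n}(\flambda, \mathcal{F}_{\lambda'})$ in the range dictated by Corollary \ref{cor:lambdabound}, or a direct construction of $W_n$-equivariant splittings exploiting the differential-form description of $A/M_3$ from Theorem \ref{thm:fsisomorphism}. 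Without such a splitting mechanism, one is stuck with the weaker conclusion of Proposition \ref{glambdabound}.
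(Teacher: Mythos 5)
The statement you are trying to prove is Conjecture \ref{glambdaconj}: the paper does not prove it, and your proposal does not either --- as you yourself concede in the final paragraph, the key step is missing, so what you have is a strategy sketch rather than a proof. That said, two substantive comments on the sketch are worth making. First, your identification of the obstacle is partly off target. The category $\mathbf{Sch}$ of abstract Schur functors is semisimple, so there are no non-trivial Yoneda extensions to rule out there, and since the conjecture only concerns the Schur-functor ($GL_n$-equivariant) decomposition, the question of whether the $W_n$-module extensions in the Jordan--H\"older series of $B_k(A_n)$ split is also irrelevant: $B_k(T(V))$ automatically decomposes as $\bigoplus_\mu \mathbb{S}_\mu^{\oplus m_\mu}$ for some multiplicities. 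The genuine difficulty is purely combinatorial: one must show that these multiplicities fit the pattern of finitely many $\mathcal{G}_\lambda$'s, i.e.\ that for fixed $k$ the multiplicities $m_{k,(\lambda,1^{2j})}$ become constant in $j$. Equivalently, as $n$ grows the only new composition factors $\flambda$ of $B_k(A_n)$ are those obtained from earlier ones by adding two boxes to the first column; Corollary \ref{cor:lambdabound} bounds $|\overline{\lambda}|$ but gives no control whatsoever on the first column, so your claim that the candidate set of diagrams ``stabilizes once $n$ exceeds $2k-5$'' is unjustified --- that stabilization is exactly the content of the conjecture.

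Second, the paper's own proposed route (Section 8) is different from your inductive Pieri-rule construction of $\Phi_k$. It combines Proposition \ref{glambdabound}, which bounds the multiplicities $m_{k,\lambda}$ uniformly in $n$ and $\lambda$ (each $\mathcal{G}_\mu$ contributes any given $\flambda$ at most once), with Conjecture \ref{neweuler}: the operator $D=[x_{n+1},x_{n+2}]E\in\widetilde{W_{n+2}}$ should inject $B_k[d_1,\ldots,d_n,0,0]$ into $B_k[d_1,\ldots,d_n,1,1]$, forcing $m_{k,\lambda}\leq m_{k,(\lambda,1,1)}$ via Proposition \ref{singprop}. A non-decreasing, bounded sequence is eventually constant, which yields the conjecture. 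So the missing idea in both your sketch and the paper is a monotonicity (or stabilization) mechanism for multiplicities across $n$; the paper at least names a concrete candidate operator $D$, whereas your sketch defers this to an unproven multiplicity-matching or extension-splitting step.
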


One approach to this conjecture is to consider the family $A=A_n$ as $n\to\infty$, along the lines of Sections 3 and 4.  While each $B_k(A_n)$ has a finite length Jordan-H\"older series consisting of modules $\flambda$ with $|\overline{\lambda}|\leq 2k-5$, there is currently no control over the first column of $\lambda$ as $n$ varies.  The conjecture states that as $n$ grows, the new diagrams to appear are precisely those obtained from an earlier diagram by adding two boxes in the first column.  Towards a possible explanation, we observe that not only the Lie algebra $W_n$, but a larger Lie algebra acts on each $B_k$:

\begin{proposition}
Let $A=A_n$.  We have $\textrm{Der}(A/M_3)\cong \widetilde{W_n}:=\oplus_i \Omega_\ast^{ev} \ot\partial_i$, with the Lie bracket:
$$[a\partial_i,b\partial_j]=(a\ast\partial_i(b))\partial_j-(b\ast\partial_j(a))\partial_i,$$
where $\partial_i(a)$ is the Lie derivative, and $\ast$ is the Fedosov product. 
\end{proposition}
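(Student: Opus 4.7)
My plan is to invoke Theorem~\ref{thm:fsisomorphism} to replace $A/M_3$ with $\Omega^{ev}_\ast$, then describe $\textrm{Der}(\Omega^{ev}_\ast)$ directly. First I would verify that $\Omega^{ev}_\ast$ is generated as a $\ast$-algebra by $x_1,\ldots,x_n$: the identity $[x_i,x_j]_\ast = x_i\ast x_j - x_j\ast x_i = 2\,dx_i\wedge dx_j$ places every basis $2$-form in the $\ast$-subalgebra generated by $x_1,\ldots,x_n$, and iterated Fedosov products of these with polynomials produce all of $\Omega^{ev}$.

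Consequently, any derivation $D$ of $\Omega^{ev}_\ast$ is uniquely determined by the tuple $(D(x_1),\ldots,D(x_n))\in\bigoplus_i\Omega^{ev}$. Conversely, any such tuple $(f_1,\ldots,f_n)$ extends by freeness of $T(\CC^n)$ to a derivation $\tilde D$ of $A$, which descends to $A/M_3$ because every derivation preserves each Lie ideal $L_k$ and hence also the two-sided ideal $M_3=AL_3$. This yields a vector space isomorphism $\Psi\colon\widetilde{W_n}\to\textrm{Der}(A/M_3)$ sending $a\partial_i$ to the derivation $D_{a\partial_i}$ characterized by $D_{a\partial_i}(x_k)=a\,\delta_{ik}$.

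To verify that $\Psi$ respects Lie brackets, I would identify $D_{a\partial_i}$ with the super Lie derivative $\mathcal{L}_{a\partial_i}$ along the super vector field $a\partial_i$ on the supermanifold with even coordinates $x_i$ and odd coordinates $dx_i$. Cartan's magic formula $\mathcal{L}=d\iota+\iota d$ then gives the explicit description
\[
D_{a\partial_i}(f) \;=\; a\wedge \partial_i(f) + da\wedge \iota_{\partial_i}(f),
\]
and the standard identity $[\mathcal{L}_X,\mathcal{L}_Y]=\mathcal{L}_{[X,Y]}$ reduces the computation of $[\Psi(a\partial_i),\Psi(b\partial_j)]$ to the super vector field bracket of $a\partial_i$ and $b\partial_j$ evaluated on the coordinate functions $x_k$.

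The main obstacle is precisely the last matching step: the expression arising naturally from the super vector field bracket has the form $(a\wedge\partial_i(b)+da\wedge\iota_{\partial_i}(b))\partial_j-(b\wedge\partial_j(a)+db\wedge\iota_{\partial_j}(a))\partial_i$, and recasting it into the stated Fedosov form $(a\ast\partial_i b)\partial_j-(b\ast\partial_j a)\partial_i$ requires a careful Cartan-style bookkeeping exercise relating $d$, $\iota_{\partial_i}$, and $\mathcal{L}_{\partial_i}$ on the coefficients. Once this identification is established, the Lie algebra isomorphism follows immediately.
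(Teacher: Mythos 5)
Your first two steps coincide with the paper's entire proof: the paper identifies $\mathrm{Der}(A/M_3)$ with $\mathrm{Der}_A(A,A/M_3)\cong A/M_3\otimes V^*$ by exactly the restriction-to-generators / freeness-of-$T(V)$ / derivations-preserve-$M_3$ argument you give, and stops there. Your explicit formula $D_{a\partial_i}(f)=a\wedge\partial_i(f)+da\wedge\iota_{\partial_i}(f)$ for the derivation with $D(x_k)=a\,\delta_{ik}$ is also correct (it commutes with $d$, is a derivation of $\wedge$ because $\deg a$ is even, hence a derivation of the Fedosov product, and has the right values on generators, so uniqueness applies); here you go beyond the paper, whose proof does not verify the bracket formula at all.

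The step you defer, however, is a genuine gap, and it is not mere bookkeeping: the identity it requires, $D_{a\partial_i}(b)=a\ast\partial_i(b)$ with $\partial_i$ the Lie derivative, is false. Indeed $a\ast\partial_i(b)=a\wedge\partial_i(b)+da\wedge d(\partial_i b)$, whereas your (correct) expression is $a\wedge\partial_i(b)+da\wedge\iota_{\partial_i}(b)$, and the second terms do not agree. Concretely, take $a=x_1$, $b=x_2\,dx_1\wedge dx_2$, $i=1$: then $\partial_1(b)=0$, so $a\ast\partial_1(b)=0$, while
\[
D_{x_1\partial_1}(b)=dx_1\wedge\iota_{\partial_1}(b)=x_2\,dx_1\wedge dx_2\neq 0,
\]
as one confirms directly by writing $b=\tfrac12\,x_2\ast(x_1\ast x_2-x_2\ast x_1)$ and applying the Leibniz rule for the Fedosov derivation term by term. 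So under the identification by values on generators, the coefficient of $\partial_j$ in $[a\partial_i,b\partial_j]$ is $a\wedge\partial_i(b)+da\wedge\iota_{\partial_i}(b)$ and not $a\ast\partial_i(b)$; to complete the argument you must either establish the proposition with this corrected coefficient, or produce a different isomorphism $\widetilde{W_n}\to\mathrm{Der}(A/M_3)$ under which the stated Fedosov formula does hold. As written, the final reduction you postpone cannot be carried out.
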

\begin{proof}
$A/M_3$ is an $A$-bimodule via the projection $\pi:A\to A/M_3$.  Clearly,
$$\textrm{Der}_A(A,A/M_3)\cong A/M_3\ot V^*\cong \Omega_{\ast}^{ev}(V)\ot V^*,$$ by Theorem \ref{thm:fsisomorphism}.  Since any derivation of $A$ preserves the ideal $M_3$, it descends to a derivation of $\textrm{Der}(A/M_3)$.\end{proof}

\begin{remark} $\widetilde{W_n}$ has a nilpotent ideal consisting of the forms of nonzero degree; the quotient by this ideal may be identified with $W_n$.  In general, for a symmetric tensor category $\cC$, and $V\in\cC$, one can define the analogous Lie algebras $\widetilde{W(V)}$ and $W(V) \in\cC$-Lie.  In particular, for $A=A_{m|n}$, one has the Lie algebras $\widetilde{W_{m|n}}$ and $W_{m|n}$; however the kernel of the map $\widetilde{W_{m|n}}\to W_{m|n}$ can be rather large; for instance in the completely odd setting, $W_{0|n}$ is finite dimensional, while $\widetilde{W_{0|n}}$ is infinite dimensional.
\end{remark}

It follows as in \cite{FS} that the Lie algebra $\widetilde{W_n}$ acts on each $B_k$.  The following conjecture can be checked for $\overline{B}_1, B_2,$ and $B_3$ from their explicit descriptions.  Let us denote the $p$th multi-graded part of a multi-graded vector space $M$ by $M[p]$.  Let $E:=\sum x_i\partial_i$ denote the Euler operator.

\begin{conjecture}\label{neweuler}
The operator $D:=[x_{n+1},x_{n+2}] E\in \widetilde{W_{n+2}}$ is an injection $D:B_k[d_1,\ldots,d_n,0,0]\hookrightarrow B_k[d_1,\ldots,d_n,1,1].$
\end{conjecture}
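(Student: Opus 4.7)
The plan is to establish injectivity of $D$ by first reducing to a statement about multiplication, then handling base cases explicitly, and finally inducting on $k$. For $b \in B_k[d_1,\ldots,d_n,0,0]$, the element $b$ involves only $x_1,\ldots,x_n$, so $\partial_i(b) = 0$ for $i > n$, and the Euler operator acts as $E(b) = |d|\,b$, where $|d| := d_1 + \cdots + d_n$. For $k \geq 2$, the grading forces $|d| \geq k > 0$ on any nonzero $b$. Unpacking the $\widetilde{W}_{n+2}$-action gives $D(b) = |d| \cdot ([x_{n+1},x_{n+2}] \cdot b)$, where $[x_{n+1},x_{n+2}]$ acts through the $(A/M_3)$-module structure on $B_k$ (with $A/M_3 \cong \Omega^{ev}_\ast$ by Theorem \ref{thm:fsisomorphism}) that underlies the $\widetilde{W}_n$-action. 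The conjecture thus reduces to injectivity of multiplication by $[x_{n+1},x_{n+2}]$ as a map $B_k[d_1,\ldots,d_n,0,0] \to B_k[d_1,\ldots,d_n,1,1]$.

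Under the Feigin--Shoikhet identification, $[x_{n+1},x_{n+2}]$ corresponds to $2\,dx_{n+1}\wedge dx_{n+2}$, so the base cases are accessible by direct computation. For $\bar{B}_1 \cong \Omega^{ev}/\Omega^{ev}_{ex}$: if $[\omega] \neq 0$ with $\omega$ involving only $x_1,\ldots,x_n,dx_1,\ldots,dx_n$, then $\omega$ cannot be closed (by the Poincar\'e lemma in positive degree), and a Leibniz computation shows $dx_{n+1}\wedge dx_{n+2}\wedge \omega \equiv x_{n+1}\,dx_{n+2}\wedge d\omega \pmod{\Omega^{ev}_{ex}}$, whose exterior derivative is $dx_{n+1}\wedge dx_{n+2}\wedge d\omega \neq 0$, ruling out exactness. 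For $B_2 \cong \Omega^{ev}_{ex}$, the map sends $d\xi$ to $dx_{n+1}\wedge dx_{n+2}\wedge d\xi$, which is nonzero whenever $d\xi \neq 0$ (since $d\xi$ does not involve $dx_{n+1}, dx_{n+2}$), giving injectivity on the nose. For $B_3 \cong S(V)\otimes\bigoplus_{k\geq 0}\mathbb{S}_{(2,1^{2k+1})}(V)$ by Theorem \ref{B1B2B3}, the map becomes a natural transformation of Schur functors on $V\oplus \CC^2$, and injectivity follows from identifying its image with the $GL(\CC^2)$-weight $(1,1)$ component of the summand $\mathbb{S}_{(2,1^{2k+3})}(V\oplus\CC^2)$ obtained by adding two boxes at the bottom of the first column, via the Pieri branching rule.

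For the inductive step $k \geq 4$, I would use Theorem \ref{Bmcor} to write any $b \in B_k[d_1,\ldots,d_n,0,0]$ as $\sum_i [a_i, c_i]$ with $a_i \in A_{\leq 2}$ and $c_i \in B_{k-1}$, both involving only the first $n$ variables. Since $A/M_3$ is commutative (a consequence of Theorem \ref{mainident}), the commutator $[a_i, [x_{n+1},x_{n+2}]] \in A$ lies in $M_3$ and hence acts trivially through the $(A/M_3)$-module structure; by the appropriate Leibniz identity for the two actions, this gives $[x_{n+1},x_{n+2}] \cdot [a_i, c_i] = [a_i, [x_{n+1},x_{n+2}] \cdot c_i]$ in $B_k$. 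The induction then attempts to conclude from $[x_{n+1},x_{n+2}]\cdot b = 0$ and the inductive hypothesis on $B_{k-1}$ that, after a suitable canonical choice of presentation, each $c_i$ vanishes, forcing $b = 0$.

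The main obstacle is this last deduction: the presentation $b = \sum [a_i, c_i]$ is highly non-canonical, and relations between distinct presentations need not be preserved under multiplication by $[x_{n+1},x_{n+2}]$ without further control over the kernel of the commutator map $A_{\leq 2} \otimes B_{k-1} \to B_k$. A conceptually cleaner alternative, which also motivates Conjecture \ref{glambdaconj}, is to work entirely with Schur functors: if $B_k(T(V)) \cong \bigoplus_{\lambda \in \Lambda}\mathcal{G}_\lambda(V)$, then under the $GL(V) \times GL(\CC^2)$-decomposition of $\mathcal{G}_\lambda(V \oplus \CC^2)$, multiplication by $[x_{n+1},x_{n+2}]$ becomes the inclusion of the $GL(\CC^2)$-weight $(0,0)$ subspace into the $(1,1)$ subspace, whose injectivity is immediate from Pieri-type branching. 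In this light, Conjecture \ref{neweuler} is most naturally a structural consequence of Conjecture \ref{glambdaconj}, and the primary obstacle to a complete proof is the absence of the latter.
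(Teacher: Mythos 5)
What you are asked to prove here is Conjecture \ref{neweuler}: the paper itself offers no proof, only the remark that the statement ``can be checked for $\bar{B}_1$, $B_2$, and $B_3$ from their explicit descriptions.'' Your base-case computations are essentially that check, and they are sound modulo small points (e.g.\ for $\bar{B}_1$ the class of a constant is nonzero and closed, so your ``$\omega$ cannot be closed'' step only applies in positive degree, which is where $E$ is invertible anyway; and the $B_3$ case is asserted via Pieri branching rather than carried out). But the proposal is not a proof of the conjecture, and you correctly say so yourself. The inductive step is genuinely broken: a presentation $b=\sum_i[a_i,c_i]$ coming from Theorem \ref{Bmcor} is far from unique, the map $A_{\leq 2}\ot B_{k-1}\to B_k$ has a large and uncontrolled kernel, and knowing that $[x_{n+1},x_{n+2}]\cdot b=\sum_i[a_i,[x_{n+1},x_{n+2}]\cdot c_i]$ vanishes gives you no way to conclude that the individual $[x_{n+1},x_{n+2}]\cdot c_i$, let alone the $c_i$ or $b$ itself, vanish. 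No amount of ``suitable canonical choice of presentation'' is currently available to repair this.

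Your proposed fallback --- deducing Conjecture \ref{neweuler} from Conjecture \ref{glambdaconj} via the $GL(V)\times GL(\CC^2)$ decomposition of $\mathcal{G}_\lambda(V\oplus\CC^2)$ --- is plausible as mathematics, but it inverts the logical role these statements play in the paper: the discussion following Conjecture \ref{neweuler} derives Conjecture \ref{glambdaconj} \emph{from} Conjecture \ref{neweuler} (together with the uniform multiplicity bound coming from Proposition \ref{glambdabound}), by showing the multiplicities $m_{k,(\lambda,1^{2j})}$ are non-decreasing and bounded, hence eventually constant. So within the paper's program your reduction is circular. The honest summary is that you have reproduced the known verifications for $\bar{B}_1$, $B_2$, $B_3$ and identified, rather than closed, the open part of the problem.
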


Let $M$ be a $W_n$-module.  We call $v\in M$ \emph{singular} if $\partial_i v=0$ for  $i=1,\ldots n$, and we denote by $M^{sing}$ the vector subspace of singular vectors.  $M^{sing}$ is a $\gl_n$-submodule of $M$.  Let $\mathcal{O}$ denote the category of $W_n$-modules satisfying the conditions of Theorem \ref{thm:rudakovwn}, and let $\mathcal{O}^\circ$ denote the full subcategory of $\mathcal{O}$ consisting of modules whose Jordan-H\"older series do not contain $\lambda=(1^k)$ for any $k$.  The following proposition is a consequence of Theorem \ref{flambdairrep}, together with the character formula for $\flambda$:

\begin{proposition}\label{singprop} Every $M\in\mathcal{O}^\circ$ is generated by $M^{sing}$ as a $W_n^0$-module.  The multiplicity of $\flambda$ in the Jordan-H\"older series of $M$ is equal to the multiplicity of $V_\lambda$ in $M^{sing}$.
\end{proposition}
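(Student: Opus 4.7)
The plan is to prove both assertions simultaneously by induction on the Jordan--H\"older length of $M$, reducing to a key lemma of \emph{Poincar\'e-type lifting of singular vectors} across short exact sequences in $\mathcal{O}^\circ$.  For $M$ of infinite JH length, one passes to a colimit over the ascending filtration of $M$ by finite-length submodules, noting that each element of $M^{sing}$ lies in such a submodule.

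\emph{Irreducible base case.}  If $M = \flambda$ with $\lambda\neq(1^k)$, Theorem \ref{flambdairrep} gives $\flambda = \widetilde{\ff}_\lambda \cong \CC[x_1,\ldots,x_n]\otimes V_\lambda$, and a direct check shows its singular vectors form precisely the ``constant'' subspace $V_\lambda$ at polynomial degree zero.  To verify Part 1 here, I would show that $N := \uu(W_n^0)\cdot V_\lambda$ is stable under each operator $\partial_i$, making $N$ a nonzero $W_n$-submodule of $\widetilde{\ff}_\lambda$ and hence all of it by irreducibility.  The stability is checked by induction on the length of a word $\xi_1\cdots\xi_r v_0$ via the bracket identity $[\partial_i, x^I\partial_j] = (\partial_i x^I)\partial_j$:  this lies in $W_n^0$ when $|I|\geq 2$ and reduces to a scalar multiple of some $\partial_l$ when $|I|=1$, which combined with $\partial_i V_\lambda = 0$ lets us push all $\partial_i$'s through to $V_\lambda$ at the cost of staying inside $N$.

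\emph{Inductive step via lifting.}  Pick an irreducible submodule $M_0 \subset M$; the hypothesis $M\in\mathcal{O}^\circ$ forces $M_0 \cong \widetilde{\ff}_\mu$ for some $\mu$.  The key technical claim is that taking singular vectors is exact on $0\to M_0\to M\to M/M_0\to 0$: only the surjectivity $M^{sing}\twoheadrightarrow (M/M_0)^{sing}$ requires proof.  Given $\bar v\in (M/M_0)^{sing}$ with any lift $v\in M$, the family $(\partial_i v)_{i=1}^n$ lies in $M_0$ and is compatible: $\partial_j(\partial_i v) = \partial_i(\partial_j v)$.  The coinduction formula for the $W_n$-action on $\widetilde{\ff}_\mu$ shows that each $\partial_i$ acts on $\widetilde{\ff}_\mu \cong \CC[x]\otimes V_\mu$ purely as $\id_{V_\mu}\otimes \partial/\partial x_i$ (there is no $\gl_n$-twist because $\partial_l(\delta_{ki}) = 0$).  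The polynomial Poincar\'e lemma for $\CC[x_1,\ldots,x_n]\otimes V_\mu$ then produces $w\in M_0$ with $\partial_i w = \partial_i v$ for all $i$, and $v-w \in M^{sing}$ is the desired singular lift of $\bar v$.

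Given the lifting, both conclusions follow quickly.  For Part 1: $\uu(W_n^0)\cdot M^{sing}$ contains $M_0$ by the base case applied to $M_0^{sing} = V_\mu$, and its image in $M/M_0$ contains $(M/M_0)^{sing}$, which generates $M/M_0$ by the inductive hypothesis; thus $\uu(W_n^0)\cdot M^{sing} = M$.  For Part 2: the $\gl_n$-multiplicity of $V_\lambda$ in $M^{sing}$ is additive along the exact sequence $0\to M_0^{sing}\to M^{sing}\to (M/M_0)^{sing}\to 0$, matching the additivity $[M:\flambda] = [M_0:\flambda] + [M/M_0:\flambda]$ of Jordan--H\"older multiplicities.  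The principal obstacle is the lifting step, which is exactly where the $\mathcal{O}^\circ$-hypothesis is essential: if $\mu = (1^k)$ for some $k<n$, then $M_0 = \mathcal{F}_{(1^k)} = \Omega^k_{cl}$ is a proper submodule of $\widetilde{\ff}_{(1^k)} = \Omega^k$, the Poincar\'e lemma inside $\Omega^k_{cl}$ breaks down (the primitive $w$ need not itself be closed), and the bijection between singular vectors and JH multiplicities can genuinely fail.
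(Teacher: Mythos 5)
Your proof is correct, and in fact it supplies an argument where the paper gives none: the paper simply asserts that the proposition ``is a consequence of Theorem \ref{flambdairrep}, together with the character formula for $\flambda$,'' without spelling out a mechanism. Your d\'evissage — base case on irreducibles plus exactness of $M\mapsto M^{sing}$ across short exact sequences — is the standard way to make that assertion precise, and you correctly isolate the crux: for $M_0\cong\widetilde{\ff}_\mu\cong\CC[x_1,\ldots,x_n]\otimes V_\mu$ the operators $\partial_i$ act as $\partial/\partial x_i\otimes\id$ (no $\gl_n$-twist on constant-coefficient fields), so the compatible family $(\partial_i v)$ is a closed $V_\mu$-valued $1$-form and the polynomial Poincar\'e lemma produces the singular lift; and this is exactly the step that breaks for $\mu=(1^k)$, which is why the hypothesis $M\in\mathcal{O}^\circ$ is needed. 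The character-formula route hinted at in the paper would instead compare $\operatorname{ch}(M)$ with $\operatorname{ch}(\CC[x])\cdot\operatorname{ch}(M^{sing})$; your structural argument is what underlies that identity anyway. A few loose ends you should tighten if writing this up: (i) justify that every $M\in\mathcal{O}$ is exhausted by finite-length submodules (this follows from the finite-dimensionality of the graded pieces together with the fact that each $\flambda$ is generated in degree $|\lambda|$), so the colimit reduction is legitimate; (ii) note that $M^{sing}$ is a locally finite, hence semisimple, $\gl_n$-module, which is what makes the multiplicity count additive along your exact sequence of singular vectors; and (iii) the degenerate partition $\lambda=\emptyset=(1^0)$ (trivial module $\CC$), for which the lifting also fails, should be understood as excluded by the definition of $\mathcal{O}^\circ$.
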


\begin{proposition}[see, e.g, \cite{AJ}, Lemma 5.1] For $k\geq 3$, and for all $n$, \mbox{$B_k(A_n)\in\mathcal{O}^\circ$}.\end{proposition}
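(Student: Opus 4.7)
My plan is to proceed by induction on $k$, using Proposition~\ref{singprop} to reduce the question to the $\gl_n$-structure of $B_k(A_n)$: it suffices to show that $B_k(A_n)$ has no $V_{(1^j)}$-isotypic component as a $\gl_n$-module (this is stronger than the absence of such a constituent in $B_k^{sing}$, but easier to control).

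The engine of the argument is a pair of Littlewood--Richardson facts.  First, for any Young diagram $\lambda$, the $\gl_n$-module $\flambda\cong\CC[x_1,\ldots,x_n]\otimes V_\lambda$ contains $V_{(1^s)}$ as a constituent only when $\lambda=(1^s)$ or $\lambda=(1^{s-1})$.  Consequently, if no Jordan--H\"older factor of a module $M\in\mathcal{O}$ is of the form $\mathcal{F}_{(1^t)}$, then $M$ itself contains no $V_{(1^s)}$ as a $\gl_n$-constituent.  Second, the $\gl_n$-constituents of $A_{\leq 2}$ are exactly $V_{(0)}, V_{(1)}, V_{(2)}, V_{(1,1)}$, and Pieri's rule shows that $V_{(1^j)}$ appears in $V_\mu\otimes V_\nu$ (for such $\mu$) only if $\nu\in\{(1^j),(1^{j-1}),(1^{j-2})\}$, i.e.\ $\nu$ is itself of the form $(1^s)$.

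For the base case $k=3$, Theorem~\ref{thm:AJisomorphism} gives $B_3(A_n)\cong\bigoplus_{r\geq 0}\mathcal{F}_{(2,1^{2r+1})}$; each composition factor has first row of length $2$, so none is of type $(1^t)$.  For the inductive step $k\geq 4$, assume $B_{k-1}(A_n)\in\mathcal{O}^\circ$, so by the first fact $B_{k-1}$ has no $V_{(1^s)}$-constituent.  Theorem~\ref{Bmcor} provides a $\gl_n$-equivariant surjection $A_{\leq 2}\otimes B_{k-1}\twoheadrightarrow B_k$, and by the second fact a $V_{(1^j)}$-constituent in the source would require some $V_{(1^s)}$-constituent in $B_{k-1}$, which is excluded.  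Hence no $V_{(1^j)}$-constituent appears in $B_k$, giving $B_k(A_n)\in\mathcal{O}^\circ$.

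The only substantive work is the verification of the two Littlewood--Richardson statements in the second paragraph; both follow easily from the observation that the single-column shape $(1^s)$ admits no horizontal strip of size $\geq 2$.  I do not anticipate any genuinely difficult step.
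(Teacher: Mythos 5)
The paper does not actually prove this proposition; it is quoted from \cite{AJ}, Lemma 5.1, and used as a black box. Your argument is therefore necessarily a different route, and as far as I can check it is correct. The two Littlewood--Richardson facts are right (for the first, note you really compute with $\widetilde{\mathcal{F}}_\lambda\cong\CC[x_1,\ldots,x_n]\otimes V_\lambda$ and restrict to the submodule $\flambda$, which is harmless for an ``only if'' statement), and the reduction from Jordan--H\"older factors to $\gl_n$-constituents is valid because each $\mathcal{F}_{(1^t)}$ contains $V_{(1^t)}$ as its lowest-degree piece and each graded component of $B_k$ is a finite-dimensional, hence semisimple, $\gl_n$-module. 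Two points deserve an explicit sentence in a final write-up. First, the map $A_{\leq 2}\otimes B_{k-1}\to B_k$ is well defined because $[A_{\leq 2},L_k]\subseteq[A,L_k]=L_{k+1}$ by the very definition of the lower central series, and it is $\gl_n$-equivariant because the $\gl_n\subset W_n$ action of \cite{FS} on $B_k$ agrees with the action induced by linear substitutions of the generators of $A_n$. Second, be aware of what your proof rests on: the inductive step invokes Theorem \ref{Bmcor}, one of the main new results of this paper (itself built on Theorem \ref{mainident}), and the base case invokes Theorem \ref{thm:AJisomorphism}, whose proof in \cite{AJ} may well use the very Lemma 5.1 you are reproving. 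Within the logical structure of this paper both are legitimately available, so there is no circularity here; but as a replacement for the original argument of \cite{AJ} your proof is not self-contained, whereas the cited lemma is proved there by more elementary means. What your approach buys is a clean, purely representation-theoretic explanation of \emph{why} column diagrams cannot propagate: bracketing with $A_{\leq 2}$ can only reach a column shape from another column shape, and $B_3$ contains none.
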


The operator $D$ preserves the vector subspace $B_k^{sing}$, and moreover maps highest weight vectors to highest weight vectors for the $\gl_n$ action.  Let $m_{k,\lambda}$ denote the multiplicity of $V_\lambda$ in $B_k$ (and thus by Proposition \ref{singprop}, the multiplicity of $\flambda$ in $B_k$).  Then \ref{neweuler} would imply that $m_{k,\lambda} \leq m_{k,(\lambda,1,1)}$.  On the other hand, Conjecture \ref{glambdabound} implies that the $m_{k,\lambda}$ are bounded from above, uniformly in $n$ and $\lambda$, since each $\mathcal{G}_\mu$ contains any $\flambda$ (regarded here as a $\gl_n$-module) as a summand at most once.  Thus Conjecture \ref{neweuler} would imply that the sequence $m_{k,(\lambda,1^{2j})}$ is both non-decreasing, and bounded from above, hence eventually constant, implying Conjecture \ref{glambdaconj}.



\begin{thebibliography}{99}
\bibitem[AJ]{AJ} N. Arbesfeld, D.Jordan. New results on the lower central series of a free algebra. J. Algebra, to appear.
\bibitem[BCP]{BCP}Wieb Bosma, John Cannon, and Catherine Playoust. The Magma algebra system. I. The user language. J. Symbolic Comput., 24(3-4):235-265, 1997 
\bibitem[D]{D} P. Deligne, Categories tensorielles, Mosc Math. J. 2 (2002).
\bibitem[DE]{DE} G. Dobrovolska and P. Etingof. An upper bound for the lower
  central series quotients of a free associative algebra. International
  Mathematics Research Notices, Vol. 2008, rnn039.
\bibitem[DKM]{DKM} G. Dobrovolska, J. Kim, X. Ma. On the lower central series
  of an associative algebra. Journal of Algebra
Volume 320, Issue 1, 1 July 2008, Pages 213-237.
\bibitem[EH]{EH} B. Enriquez, G. Halbout.  Quantization of coboundary Lie bialgebras.  Annals of Math. Vol 171 (2010), Issue 2: 1267-1345.
\bibitem[EKM]{EKM} P. Etingof, J. Kim, X. Ma. On universal Lie nilpotent
  associative algebras. arXiv/0805.1909
\bibitem[FF]{FF} B. Feigin, D. B. Fuks. Cohomologies of Lie Groups and Lie Algebras, in A.L. Onishchik, E.B. Vinberg (Eds.), \emph{Lie Groups and Lie Algebras II}. Springer, New York 2000.
\bibitem[FS]{FS} B. Feigin, B. Shoikhet. On $[A,A]/[A,[A,A]]$ and on a
  $W_{n}$-action on the consecutive commutators of free associative
  algebras. Math. Res. Lett. 14 (2007), no. 5, 781-795.
\bibitem[R]{ANR} A. N. Rudakov. Irreducible representations of infinite-dimensional Lie algebras of Cartan type. Math. USSR Izv. Vol. 8, pgs. 836-866.
  \end{thebibliography}
\end{document}